\definecolor{orange}{RGB}{255,127,0}
\newcommand{\Bc}{\check{B}}
\newcommand{\Pol}{\operatorname{Pol}}
\newcommand{\Hom}{\operatorname{Hom}}
\newcommand{\Gal}{\operatorname{Gal}}
\newcommand{\Int}{\operatorname{Int}}
\newcommand{\lc}{\operatorname{lc}}
\newcommand{\cK}{\mathcal{K}}
\newcommand{\Qp}{\mathbf{Q}_p}
\newcommand{\Qpbar}{\overline{\mathbf{Q}}_p}
\newcommand{\Cp}{\mathbf{C}_p}
\newcommand{\QQ}{\mathbf{Q}}
\newcommand{\NN}{\mathbf{N}}
\newcommand{\Gm}{\mathbf{G}_\mathrm{m}}
\renewcommand{\phi}{\varphi} 
\newcommand{\frX}{\mathfrak{X}}
\newcommand{\MM}{\mathfrak{m}}
\newcommand{\val}{\operatorname{val}}
\newcommand{\vp}{\val_p}
\newcommand{\Tr}{\operatorname{Tr}}
\newcommand{\cyc}{\operatorname{cyc}}
\newcommand{\dcroc}[1]{[\![ #1 ]\!]}
\newcommand{\LT}{\operatorname{LT}}
\theoremstyle{plain}
\newtheorem{theo}{Theorem}[section]
\newtheorem{coro}[theo]{Corollary}
\newtheorem{lemm}[theo]{Lemma}
\newtheorem{prop}[theo]{Proposition}
\newtheorem{defi}[theo]{Definition}
\newtheorem{rema}[theo]{Remark}
\newtheorem*{theoA}{Theorem A}
\newtheorem*{defiA}{Definition}
\newtheorem*{theoB}{Theorem B}
\newtheorem*{quesA}{Question}
\begin{document}

\title{Integer-valued polynomials and $p$-adic Fourier theory}

\author{Laurent Berger}
\address{UMPA, ENS de Lyon \\ 
UMR 5669 du CNRS \\
Lyon, France}
\email{laurent.berger@ens-lyon.fr}
\urladdr{https://perso.ens-lyon.fr/laurent.berger/}

\author{Johannes Sprang}
\address{Fakult\"at f\"ur Mathematik \\
Universit\"at Duisburg-Essen \\
Essen, Germany}
\email{johannes.sprang@uni-due.de}
\urladdr{https://www.esaga.uni-due.de/johannes.sprang/}

\begin{abstract}
The goal of this paper is to give a numerical criterion for an open question in $p$-adic Fourier theory. Let $F$ be a finite extension of $\mathbf{Q}_p$. Schneider and Teitelbaum defined and studied the character variety $\mathfrak{X}$, which is a rigid analytic curve over $F$ that parameterizes the set of locally $F$-analytic characters $\lambda : (o_F,+) \to (\mathbf{C}_p^\times,\times)$. Determining the structure of the ring $\Lambda_F(\mathfrak{X})$ of bounded-by-one functions on $\mathfrak{X}$ defined over $F$ seems like a difficult question. Using the Katz isomorphism,  we prove that if $F= \mathbf{Q}_{p^2}$, then $\Lambda_F(\mathfrak{X}) = o_F [\![o_F]\!]$ if and only if the $o_F$-module of integer-valued polynomials on $o_F$ is generated by a certain explicit set. Some computations in SageMath indicate that this seems to be the case.
\end{abstract}

\date{\today}

\maketitle

\tableofcontents

\setlength{\baselineskip}{17pt}

\section{Introduction}
\label{intro}

Let $F$ be a finite extension of $\Qp$ with ring of integers $o_F$. Let $\Cp$ denote the completion of an algebraic closure of $\Qp$. In their work on $p$-adic Fourier theory \cite{ST01}, Schneider and Teitelbaum defined and studied the character variety $\frX$. This character variety is a rigid analytic curve over $F$ that parameterizes the set of locally $F$-analytic characters $\lambda : (o_F,+) \to (\Cp^\times,\times)$. Let $\Lambda_F(\frX)$ denote the ring of functions on $\frX$ defined over $F$ whose norms are bounded above by $1$. If $\mu \in o_F \dcroc{o_F}$ is a measure on $o_F$, then $\lambda \mapsto \mu(\lambda)$ gives rise to such a function $\frX \to \Cp$. The resulting map $o_F \dcroc{o_F} \to \Lambda_F(\frX)$ is injective. We do not know of any example of an element of $\Lambda_F(\frX)$ that is not in the image of the above map. 

\begin{quesA}
\label{quesA}
Do we have $\Lambda_F(\frX) = o_F \dcroc{o_F}$?
\end{quesA}

This question seems to be quite difficult, and is extensively studied in \cite{AB24}. The goal of our paper is to give, for $F=\QQ_{p^2}$, a simple criterion for the above question, that can be checked numerically. Numerical evidence then seems to indicate that the answer to the question is ``yes'' for $F=\QQ_{p^2}$. We now formulate this criterion. For the time being, let $F \neq \Qp$ be any finite proper extension of $\Qp$, let $\pi$ be a uniformizer of $o_F$ and let $\LT$ denote the Lubin--Tate formal group attached to $\pi$. Once we have chosen a coordinate $X$ on $\LT$, we have a formal addition law $T\oplus U  \in o_F \dcroc{T,U}$ and endomorphisms $[a](X) \in o_F \dcroc{X}$ for all $a \in o_F$.

Let $\Int$ denote the set of integer-valued polynomials on $o_F$, namely those polynomials $P(T) \in F[T]$ such that $P(o_F) \subset o_F$. If $f(X) \in o_F \dcroc{X}$, there exist polynomials $c_{f,n}(T) \in \Int$ for all $n \geq 0$ such that $f([a](X)) = \sum_{n \geq 0} c_{f,n}(a) X^n$. 

\begin{defiA}
\label{defiA}
If $M$ is a subset of $o_F \dcroc{X}$, let $\Pol(M)$ denote the sub $o_F$-module of $\Int$ generated by the $c_{f,n}$ with $f \in M$ and $n \geq 0$. 
\end{defiA}

In particular, the module $\Pol$ defined in \S 1.5 of \cite{AB24} is equal to $\Pol( \{ 1, X,X^2,\hdots\}) = \Pol(o_F \dcroc{X})$, and theorem 1.5.1 of \cite{AB24} states that if $\Lambda_F(\frX) = o_F \dcroc{o_F}$, then $\Pol=\Int$. However, the reverse implication is not true, and the goal of our paper is to provide an analogous ``if and only if'' statement, at least when $F=\QQ_{p^2}$. The ring $o_F \dcroc{X}$ is equipped with an operator $\phi$ defined by $\phi(f)(X) = f([\pi](X))$ and an operator $\psi$ given by 
\[ \phi(\psi(f(X))) = \frac{1}{\pi} \cdot \Tr_{o_F \dcroc{X} / \phi(o_F \dcroc{X}) } (f(X)). \] 
Note that $\psi(f) = 0$ if and only if $\sum_{\eta \in \MM_{\Cp},[\pi](\eta)=0} f(X \oplus \eta) = 0$. 

The first result of this paper is the following criterion for the above question.

\begin{theoA}
\label{theoA}
If $F=\QQ_{p^2}$, then $\Lambda_F(\frX) = o_F \dcroc{o_F} \Leftrightarrow \Pol(o_F \dcroc{X}^{\psi=0}) = \Int$.
\end{theoA}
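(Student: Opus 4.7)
The plan is to improve the injection $\Lambda_F(\frX) / o_F \dcroc{o_F} \hookrightarrow \Int / \Pol(o_F \dcroc{X})$ of Theorem 1.5.1 of \cite{AB24} to an isomorphism onto the smaller target $\Int / \Pol(o_F \dcroc{X}^{\psi=0})$ in the case $F = \QQ_{p^2}$. The starting observation is a compatibility between two direct-sum decompositions: the partition $o_F = \pi o_F \sqcup o_F^\times$ yields $o_F \dcroc{o_F} = [\pi]_* \, o_F \dcroc{o_F} \oplus o_F \dcroc{o_F^\times}$ on the measure side, and under the Fourier transform the first summand lands in $\phi(o_F \dcroc{X})$ while the second lands in $\ker \psi$. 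Concretely, $\sum_{[\pi](\eta) = 0} A_\mu(X \oplus \eta) = q \cdot A_{\mu|_{\pi o_F}}(X)$ by orthogonality of the $\pi$-torsion characters of $o_F$, so $\psi(A_\mu) = 0$ iff $\mu$ is supported on $o_F^\times$.

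For the forward direction, assume $\Lambda_F(\frX) = o_F \dcroc{o_F}$. By Theorem 1.5.1 of \cite{AB24} we already have $\Int = \Pol(o_F \dcroc{X})$, so it suffices to replace every generator $c_{f, n}$ with $f \in o_F \dcroc{X}$ by generators coming from $f \in \ker \psi$. The construction underlying that theorem attaches to each such $(f, n)$ a function $h_{f, n} \in \Lambda_F(\frX)$ whose measure-class encodes $c_{f, n}$; by the compatibility above, when $f \in \phi(o_F \dcroc{X})$ the associated $h_{f, n}$ can be realized inside the $[\pi]_* o_F \dcroc{o_F}$ summand and therefore contributes trivially to the obstruction, while $f \in \ker \psi$ gives a generator already in $\Pol(o_F \dcroc{X}^{\psi = 0})$ by definition. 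Combining these over all $f, n$ gives $\Int \subseteq \Pol(o_F \dcroc{X}^{\psi = 0})$, hence equality.

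For the converse, assume $\Pol(o_F \dcroc{X}^{\psi = 0}) = \Int$ and take $h \in \Lambda_F(\frX)$. The same construction assigns to $h$ an obstruction class represented by a finite $o_F$-combination of polynomials $c_{f, n}$, each of which is by the hypothesis an $o_F$-combination of $c_{g, m}$ with $g \in \ker \psi$. By the first paragraph, each such $c_{g, m}$ is realized as the Amice expansion of an explicit measure on $o_F^\times$, and subtracting these measures from $h$ cancels the obstruction class. Thus $h \in o_F \dcroc{o_F}$.

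The main obstacle, I expect, is the technical verification in the forward direction that the $\phi$-image contribution to $f$ absorbs cleanly into $o_F \dcroc{o_F}$ and that the $\psi = 0$ normalization is preserved by the Galois descent characterizing $\Lambda_F(\frX)$ inside $\Lambda_{\Cp}(\frX)$. The hypothesis $F = \QQ_{p^2}$ enters essentially here: $\Gal(F / \QQ_p)$ has order $2$ and the descent datum reduces to a single Frobenius twist matched by a single application of $\psi$. For unramified extensions of higher degree or for ramified $F$, further cocycle contributions appear that cannot be captured by $\psi$ alone, which is why an iff statement of this shape is expected only in the quadratic unramified case.
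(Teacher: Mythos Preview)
Your proposal has genuine gaps and does not constitute a proof.

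First, the central input of the paper's argument is entirely missing from yours: the Katz isomorphism $\cK_1^* : \Hom_{o_F}(o_\infty,o_F) \xrightarrow{\sim} o_F\dcroc{X}^{\psi=0}$, which is known precisely when $F=\QQ_{p^2}$ (Theorem~3.6.14 of \cite{AB24}). The paper uses this to define a pairing $o_\infty \times o_F\dcroc{X}^{\psi=0} \to o_F$, then shows via the equivariance $\langle P_k(\Omega), f([a](X))\rangle = \langle P_k(a\Omega), f(X)\rangle$ that $\Pol(o_F\dcroc{X}^{\psi=0})$ coincides with the module $\Bc$ spanned by the polynomials $\rho_{i,k}$. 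Theorem~A then follows from the already-established criterion $\Lambda_F(\frX)=o_F\dcroc{o_F} \Leftrightarrow \Bc=\Int$ (Corollary~4.2.19 of \cite{AB24}). Your explanation for why $F=\QQ_{p^2}$ is special --- a two-element Galois group making ``the descent datum reduce to a single Frobenius twist matched by a single application of $\psi$'' --- is not the actual mechanism; the restriction comes from the surjectivity of $\cK_1^*$.

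Second, your starting point appears to misstate Theorem~1.5.1 of \cite{AB24}. That result gives the implication $\Lambda_F(\frX)=o_F\dcroc{o_F} \Rightarrow \Pol(o_F\dcroc{X})=\Int$; the paper says explicitly that the reverse implication does not follow. An injection $\Lambda_F(\frX)/o_F\dcroc{o_F} \hookrightarrow \Int/\Pol(o_F\dcroc{X})$ in the direction you write would in fact yield that reverse implication, so no such map is available as input. Your converse argument (``the same construction assigns to $h$ an obstruction class represented by a finite $o_F$-combination of polynomials $c_{f,n}$'') therefore rests on a construction that is neither in \cite{AB24} nor supplied here.

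Third, even granting the decomposition $f = \phi(\psi f) + (f-\phi\psi f)$, your forward direction does not go through as written. You assert that when $f\in\phi(o_F\dcroc{X})$ the contribution ``can be realized inside the $[\pi]_* o_F\dcroc{o_F}$ summand and therefore contributes trivially to the obstruction,'' but the obstruction in question is whether $c_{f,n}\in\Pol(o_F\dcroc{X}^{\psi=0})$, and nothing you have said explains why $c_{\phi(g),n}$ lies there. The phrases ``obstruction class,'' ``the construction underlying that theorem,'' and ``absorbs cleanly'' are placeholders for arguments that are never given.
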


Let us remark that the question of whether $\Pol(o_F \dcroc{X}^{\psi=0}) = \Int$ depends neither on the choice of a coordinate on $\LT$ nor on the choice of the uniformizer $\pi$ used in the definition of $\LT$ (for instance, it is equivalent to $\Lambda_F(\frX) = o_F \dcroc{o_F}$ by the above theorem).
The appendix of \cite{AB24} is devoted to checking numerically that $\Pol=\Int$ for various fields $F$. We adapt these methods and give numerical evidence that $\Pol(o_F \dcroc{X}^{\psi=0}) = \Int$ when $F=\QQ_{p^2}$. This is the first compelling evidence in favor of the fact that $\Lambda_F(\frX) = o_F \dcroc{o_F}$, at least for $F=\QQ_{p^2}$. In addition, we prove theorem B below, which implies that $\Pol(o_F \dcroc{X}^{\psi=0})$ is $p$-adically dense in $\Int$. 

\begin{theoB}
\label{theoB}
For all $F$,  we have $\Pol(o_F \dcroc{X}^{\psi=0}) + \pi \cdot \Int = \Int$.
\end{theoB}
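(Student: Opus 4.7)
My plan is to use the direct-sum decomposition $o_F\dcroc{X} = o_F\dcroc{X}^{\psi=0} \oplus \phi(o_F\dcroc{X})$ coming from $\psi \circ \phi = \mathrm{id}$. For $f \in o_F\dcroc{X}$, the projection onto the $\psi = 0$ summand is $f^{\circ} := f - \phi(\psi(f))$. The crucial identity is $\phi(h)([a](X)) = h([\pi a](X))$, which rephrases as $c_{\phi(h), n}(T) = c_{h, n}(\pi T)$, and gives the recursion
\[c_{f, n}(T) = c_{f^{\circ}, n}(T) + c_{\psi(f), n}(\pi T).\]

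Iterating with $g_i := \psi^i(f)$ and $g_i^{\circ} := g_i - \phi(\psi(g_i)) \in o_F\dcroc{X}^{\psi = 0}$, one obtains
\[c_{f, n}(T) = \sum_{i=0}^{N-1} c_{g_i^{\circ}, n}(\pi^i T) + c_{g_N, n}(\pi^N T).\]
For the remainder: when $n \geq 1$, every $c_{g, n} \in \Int$ satisfies $c_{g, n}(0) = 0$ (this is the coefficient of $X^n$ in the constant $g(0)$), so $c_{g_N, n}(T)$ is divisible by $T$; combined with the standard bound on the denominators of integer-valued polynomials of bounded degree via the Bhargava factorials of $o_F$, this forces $c_{g_N, n}(\pi^N T) \in \pi \Int$ for $N$ large enough depending on $n$. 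The case $n = 0$ reduces to showing that $\{f(0) : f \in o_F\dcroc{X}^{\psi = 0}\}$ surjects onto $o_F/\pi o_F$, witnessed by $1 - q/\pi \in o_F\dcroc{X}^{\psi = 0}$ (which reduces to $1$ modulo $\pi$ when $F \neq \Qp$, using $q/\pi \in \pi o_F$) and, separately, by $(1+X)^a$ for $a \in \Zp^{\times}$ when $F = \Qp$.

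The main obstacle is to show that each head term $c_{g_i^{\circ}, n}(\pi^i T) = c_{\phi^i(g_i^{\circ}), n}(T)$ for $1 \leq i < N$ lies in $\Pol(o_F\dcroc{X}^{\psi = 0}) + \pi \Int$. Since $\phi^i(g_i^{\circ})$ is itself not in $o_F\dcroc{X}^{\psi = 0}$ — in fact, the mod-$\pi$ reduction of $\phi^i(o_F\dcroc{X})$ lies in $k\dcroc{X^{q^i}}$, which meets the image of $o_F\dcroc{X}^{\psi = 0}$ in $k\dcroc{X}$ trivially — naive lifting fails. The resolution I propose exploits that $c_{h, n}$ encodes only the behavior of $h$ under $X \mapsto [a](X)$ rather than $h$ itself: modulo $\pi$, the function $a \mapsto c_{g_i^{\circ}, n}(\pi^i a)$ is locally constant on $o_F$, factoring through some finite quotient $o_F / \pi^M o_F$. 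I plan to realize this finitely-supported function by an explicit element of $o_F\dcroc{X}^{\psi = 0}$, proceeding by induction on $i$ and using the $\phi(o_F\dcroc{X})$-module structure of $o_F\dcroc{X}^{\psi = 0}$ (i.e.\ that $\phi(u) \cdot v \in o_F\dcroc{X}^{\psi = 0}$ for $u \in o_F\dcroc{X}$ and $v \in o_F\dcroc{X}^{\psi = 0}$). Combined with the fact that $\Pol(o_F\dcroc{X}) + \pi \Int = \Int$ — which I would either import from the analysis of $\Pol$ in \cite{AB24} or obtain by running the same iteration argument without imposing $\psi = 0$ — this will complete the proof.
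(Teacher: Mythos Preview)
Your plan has a genuine gap at its very first step. With the paper's normalization, $\phi(\psi(f)) = \pi^{-1}\Tr(f)$, so $\psi\circ\phi = q/\pi$, not the identity; for $F\neq\Qp$ the element $q/\pi$ lies in $\pi o_F$. Consequently your projector $f^{\circ} = f - \phi(\psi(f))$ does \emph{not} land in $o_F\dcroc{X}^{\psi=0}$: one computes $\psi(f^{\circ}) = (1-q/\pi)\psi(f)$, which vanishes only when $\psi(f)$ already does. If instead you renormalize to $\psi_q = (\pi/q)\psi$ so that $\psi_q\circ\phi=\mathrm{id}$, then $\psi_q$ is no longer integral (e.g.\ $\psi_q(X^{q-1}) = \pi(1-q)/q \notin o_F$), and the projector $f-\phi(\psi_q(f))$ leaves $o_F\dcroc{X}$. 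In fact the decomposition $o_F\dcroc{X} = o_F\dcroc{X}^{\psi=0}\oplus\phi(o_F\dcroc{X})$ fails even modulo $\pi$: for $[\pi](X)=\pi X+X^q$ one has $\psi(f)\equiv f_{q-1}\bmod\pi$ where $f=\sum X^i\phi(f_i)$, so the image of $o_F\dcroc{X}^{\psi=0}$ in $k\dcroc{X}$ is $\bigoplus_{i=0}^{q-2}X^i k\dcroc{X^q}$, which together with $k\dcroc{X^q}$ still misses $X^{q-1}k\dcroc{X^q}$. So the recursion $c_{f,n}(T)=c_{f^{\circ},n}(T)+c_{\psi(f),n}(\pi T)$ never gets off the ground for $F\neq\Qp$.

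Even setting this aside, the part you flag as the ``main obstacle'' is not actually addressed: you correctly observe that $\phi^i(g_i^{\circ})\notin o_F\dcroc{X}^{\psi=0}$ and that its mod-$\pi$ class lies in $k\dcroc{X^{q^i}}$, and then propose to ``realize'' the locally constant function $a\mapsto c_{g_i^{\circ},n}(\pi^i a)\bmod\pi$ by an explicit element of $o_F\dcroc{X}^{\psi=0}$, by induction on $i$ and using the $\phi(o_F\dcroc{X})$-module structure. But no construction is given, and this is precisely the hard point: a priori there is no reason a prescribed function on $o_F/\pi^M$ arises as some $c_{h,n}\bmod\pi$ with $h\in o_F\dcroc{X}^{\psi=0}$. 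The paper's two proofs avoid this entirely. The first shows that $\Pol(o_F\dcroc{X}^{\psi=0})/\pi$ is closed under multiplication (by a two-variable interpolation argument over $k$, using that $f(X)g([b](X))\in o_F\dcroc{X}^{\psi=0}$ for $b\in\pi o_F$), and then invokes the fact that $X\in o_F\dcroc{X}^{\psi=0}$ together with de Shalit--Iceland's result that the $c_{1,q^k}$ generate $\Int$ as an $o_F$-algebra. The second shows that the image of $\Pol(o_F\dcroc{X}^{\psi=0})$ in $\Int/\pi = C^0(o_F,k)$ is stable under translations and dilations and is infinite-dimensional, and then appeals to the classification of closed $o_F^\times$-stable ideals of $k\dcroc{o_F}$.
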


The main ingedient for the proof of theorem A is the ``Katz isomorphism'' proved in \cite{AB24} for $F=\QQ_{p^2}$, which gives rise to an isomorphism $\Hom_{o_F}(o_\infty,o_F) \simeq o_F \dcroc{X}^{\psi=0}$ where $o_\infty$ is (at least when $\pi=p$) the ring of integers of the field generated by the torsion points of $\LT$. We prove theorem B by using some results of \cite{SI09} on Mahler bases and coefficients of Lubin--Tate power series. Using these results, it is enough to show that $\Pol(o_F \dcroc{X}^{\psi=0}) + \pi \cdot \Int$ is stable under multiplication. In order to do this, we prove that in some sense, the coefficients of a power series $F(X,Y) \in k \dcroc{X,Y}$ (where $k$ is the residue field of $o_F$) can be recovered from the coefficients of $F(X,[b](X))$ for sufficiently many $b \in o_F$. We also sketch a completely different proof of theorem B, based on a similar unpublished argument of Ardakov for proving that $\Pol(o_F \dcroc{X}) + \pi \cdot \Int = \Int$.

\vspace{\baselineskip}
\noindent\textbf{Acknowledgements.} We would like to thank Konstantin Ardakov, Sandra Rozensztajn and Rustam Steingart for useful comments on a first draft of this paper. 

\section{Notation}
\label{secpol}

We use the notation of the introduction: $F$ is a finite extension of $\Qp$ of degree $d > 1$ and ramification index $e$, with ring of integers $o_F$. The  residue field $k$ of $o_F$ has cardinality $q = p^f$ and $\pi$ is a uniformizer of $o_F$. Let $\Int$ denote the set of integer-valued polynomials on $o_F$, namely those polynomials $P(T) \in F[T]$ such that $P(o_F) \subset o_F$. Let $\LT$ denote the Lubin--Tate formal group attached to $\pi$ (see \cite{LT65}) and let $X$ be a coordinate on $\LT$. We have a formal addition law $T \oplus U  \in o_F \dcroc{T,U}$, endomorphisms $[a](X) \in o_F \dcroc{X}$ for all $a \in o_F$, a logarithm $\log_{\LT}(X) \in F \dcroc{X}$ and a Lubin--Tate character $\chi_\pi : \Gal(\Qpbar/F) \to o_F^\times$. Let $\chi_{\cyc}$ denote the cyclotomic character, and let $\tau : G_L \to o_F^\times$ denote the character $\tau = \chi_{\cyc} \cdot \chi_\pi^{-1}$. If $F \neq \Qp$, the image of $\tau$ is open in $o_F^\times$, compare \cite[Lemma 2.6.3]{AB24}.

The monoid $(o_F,\times)$ acts on $o_F \dcroc{X}$ by $a \cdot f(X) = f([a](X))$. The map $\phi$ is defined by $\phi(f)(X) = f([\pi](X))$ and  $\psi$ is given by $\phi(\psi(f(X))) = 1/\pi \cdot \Tr_{o_F \dcroc{X} / \phi(o_F \dcroc{X}) } (f(X))$.

If $f(X) \in o_F \dcroc{X}$, there exist polynomials $c_{f,n}(T) \in \Int$ for all $n \geq 0$ such that for $a \in o_F$, $f([a](X)) = \sum_{n \geq 0} c_{f,n}(a) X^n$. If $M$ is a subset of $o_F \dcroc{X}$, let $\Pol(M)$ denote the sub $o_F$-module of $\Int$ generated by the $c_{f,n}$ with $f \in M$ and $n \geq 0$. If $i \geq 0$, we let $c_{i,n} = c_{f,n}$ with $f(X) = X^i$. Note that $\Pol( \{ 1, X,X^2,\hdots\}) = \Pol(o_F \dcroc{X})$.

\section{$p$-adic Fourier theory}
\label{fousec}

Recall (see \S 3 and \S 4 of \cite{ST01} for what follows) that $\Hom_{o_{\Cp}} (\LT,\Gm)$ is a free $o_F$-module of rank $1$. Choosing a generator of this module gives a power series $G(X) \in X \cdot o_{\Cp}\dcroc{X}$ such that $G(X) = \Omega \cdot X + \cdots$, where $\Omega \in o_{\Cp}$ is such that $g(\Omega) = \tau(g) \cdot \Omega$ if $g \in \Gal(\Qpbar/F)$ and $\vp(\Omega) = 1/(p-1)-1/e(q-1)$. In particular, $1 + G(X) = \exp(\Omega \cdot \log_{\LT}(X)) = \sum_{n \geq 0} P_n(\Omega) X^n$ where $P_n(Y) \in F[Y]$ is a polynomial of degree $n$ such that $P_n(\Omega \cdot o_F) \subset o_{\Cp}$. 

Let $F_\infty = \Cp^{\ker \tau}$ and let $o_\infty$ denote the ring of integers of $F_\infty$. Note that by \S 2.7 of \cite{AB24}, we have $F_\infty = \widehat{F(\Omega)}$. We have (see \S 3.3 of \cite{AB24} as well as \cite{Kat77}) a map $\cK_1^* : \Hom_{o_F}(o_\infty,o_F) \to o_F \dcroc{X}$ that sends $h \in \Hom_{o_F}(o_\infty,o_F)$ to $\sum_{n \geq 0} h(P_n(\Omega)) X^n$.

\begin{theo}
\label{katzpair}
The map $\cK_1^*$ is injective, its image is included in $o_F \dcroc{X}^{\psi=0}$, and if $F=\QQ_{p^2}$ then it gives rise to an isomorphism $\Hom_{o_F}(o_\infty,o_F) \to o_F \dcroc{X}^{\psi=0}$.
\end{theo}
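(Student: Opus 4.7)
The statement splits into three parts I would treat separately: (i) injectivity of $\cK_1^*$, with the image landing in $o_F\dcroc{X}$; (ii) that the image is annihilated by $\psi$; (iii) surjectivity onto $o_F\dcroc{X}^{\psi=0}$ in the special case $F=\QQ_{p^2}$. Parts (i) and (ii) admit direct proofs, while (iii) is the deep content and is to be imported from \cite{AB24}.

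For (i), each $P_n(\Omega)$ lies in $o_\infty$: it is a polynomial in $\Omega \in F_\infty$, hence in $F_\infty$, and by hypothesis on $P_n$ it belongs to $o_{\Cp}$, so $P_n(\Omega) \in F_\infty \cap o_{\Cp} = o_\infty$. Therefore $h(P_n(\Omega)) \in o_F$ and the image of $\cK_1^*$ lies in $o_F\dcroc{X}$. For injectivity, I would use that $\{P_n(\Omega)\}_{n\geq 0}$ is an $o_F$-orthonormal basis of $o_\infty$ in the $p$-adic Banach sense, a fact going back to \cite{ST01}; the $o_F$-span is then $p$-adically dense in $o_\infty$, and any continuous $o_F$-linear $h$ vanishing on all $P_n(\Omega)$ must vanish identically.

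For (ii), I would apply the criterion $\psi(f)=0 \Leftrightarrow \sum_{[\pi](\eta)=0} f(X\oplus \eta) = 0$ recalled in the introduction. Since $1+G : \LT \to \Gm$ is a group homomorphism we have $1+G(X\oplus Y) = (1+G(X))(1+G(Y))$. Writing $(X\oplus Y)^n = \sum_m \alpha_{n,m}(Y)\, X^m$ with $\alpha_{n,m}(Y) \in o_F\dcroc{Y}$ and comparing the coefficients of $X^m$ on both sides of this functional equation yields
\[
(1+G(Y))\, P_m(\Omega) \;=\; \sum_{n} P_n(\Omega)\, \alpha_{n,m}(Y).
\]
Setting $Y=\eta$ and summing over the $\pi$-torsion, the key input is the orthogonality relation $\sum_\eta (1+G(\eta)) = 0$: this holds because $\eta \mapsto 1+G(\eta)$ is a nontrivial character on the finite group of $\pi$-torsion points, since otherwise $G$ would factor through $[\pi]$ and hence be $\pi$-divisible in $\Hom_{o_{\Cp}}(\LT,\Gm)$, contradicting that $G$ is a generator of that free $o_F$-module of rank one. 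One thus obtains $\sum_n P_n(\Omega)\, \beta_{n,m} = 0$, where $\beta_{n,m} := \sum_\eta \alpha_{n,m}(\eta) \in o_F$ by Galois invariance and integrality of $\alpha_{n,m}$. The bound $|\alpha_{n,m}(\eta)| \leq |\eta|^{\max(n-m,0)}$, coming from $X\oplus Y \equiv X+Y \pmod{XY}$, ensures convergence of the relevant series, so the continuity and $o_F$-linearity of $h$ give $\sum_n h(P_n(\Omega))\, \beta_{n,m} = 0$; but this is precisely the coefficient of $X^m$ in $\sum_\eta \cK_1^*(h)(X\oplus \eta)$, so $\psi(\cK_1^*(h))=0$.

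The hard part is (iii), and I expect it to be the main obstacle. For this I would simply invoke the Katz isomorphism established in \cite[\S 3.3]{AB24}, which produces, for every $f \in o_F\dcroc{X}^{\psi=0}$, a continuous $o_F$-linear map $h : o_\infty \to o_F$ such that $\cK_1^*(h)=f$, under the restriction $F=\QQ_{p^2}$. This step is not formal and relies crucially on the full $p$-adic Fourier machinery developed there, specialized to the quadratic unramified case.
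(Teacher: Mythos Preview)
Your three-part decomposition is exactly the paper's, and parts (ii) and (iii) are fine: the paper simply cites lemma~3.3.8 and theorem~3.6.14 of \cite{AB24} for these, while you unpack (ii) into a direct argument via the functional equation $1+G(X\oplus Y)=(1+G(X))(1+G(Y))$ and the orthogonality of the nontrivial character $\eta\mapsto 1+G(\eta)$ on $\LT[\pi]$. That argument is correct, including the convergence bookkeeping. For (iii), note the precise reference is theorem~3.6.14 of \cite{AB24}, not \S 3.3.

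The one genuine inaccuracy is in (i). The assertion that $\{P_n(\Omega)\}_{n\geq 0}$ is an $o_F$-orthonormal basis of $o_\infty$, attributed to \cite{ST01}, is neither in \cite{ST01} nor true in general; indeed the paper later introduces a separate regular basis $\{b_n(\Omega)\}$ for $F[\Omega]\cap o_\infty$ precisely because the $P_n(\Omega)$ do not already form one. What your argument actually needs, and what the paper invokes, is only the weaker fact that the $F$-span $F[\Omega]$ of the $P_n(\Omega)$ is $p$-adically dense in $F_\infty=\widehat{F(\Omega)}$; the paper cites prop.~6.2 of \cite{APZ98} for this. With that replacement your proof of (i) goes through unchanged: any $h\in\Hom_{o_F}(o_\infty,o_F)$ is automatically continuous (since $h(\pi^n o_\infty)\subset \pi^n o_F$), extends $F$-linearly to $F_\infty\to F$, and vanishing on all $P_n(\Omega)$ forces $h=0$ by density.
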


\begin{proof}
If $ h \in \Hom_{o_F}(o_\infty,o_F)$, then $h$ extends to a continuous $F$-linear map $h : F_\infty \to F$. If $h(P_n(\Omega)) = 0$ for all $n \geq 0$, then $h=0$ on $F[\Omega]$. By prop 6.2 of \cite{APZ98}, $F[\Omega]$ is $p$-adically dense in $\widehat{F(\Omega)} = F_\infty$. This proves the injectivity of $\cK_1^*$. The fact that the image of $\cK_1^*$ is included in $o_F \dcroc{X}^{\psi=0}$ is lemma 3.3.8 of \cite{AB24}. The last assertion is theorem 3.6.14 of \cite{AB24}.
\end{proof}

We now assume that $F = \QQ_{p^2}$. We use the map $\cK_1^*$ to define a pairing $\langle \cdot , \cdot \rangle : o_\infty \times o_F \dcroc{X}^{\psi=0} \to o_F$, given by the formula $\langle z , f(X) \rangle = h(z)$ where $h \in \Hom_{o_F}(o_\infty,o_F)$ is such that $\cK_1^*(h) = f(X)$. By definition, we have $\langle P_n(\Omega) , \sum_{i \geq 0} f_i X^i \rangle = f_n$.

\begin{lemm}
\label{paireq}
If $P(T) \in F[T]$ is such that $P(\Omega) \in o_\infty$, and $f(X) \in o_F \dcroc{X}^{\psi=0}$, then $\langle P(\Omega) , f([a](X)) \rangle = \langle P(a \Omega) , f(X) \rangle$.
\end{lemm}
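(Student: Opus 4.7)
The plan is to reduce to the case $P = P_k$ for each $k \geq 0$ by $F$-linearity in $P$: the family $(P_k)_{k \geq 0}$ forms an $F$-basis of $F[T]$ since $\deg P_k = k$, and both sides of the asserted equality depend $F$-linearly on $P$ via the natural extension of the pairing suggested by the formula $\langle P_n(\Omega), \sum_i g_i X^i \rangle = g_n$.

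The key ingredient is the identity
\[
P_k(a\Omega) \;=\; \sum_{n=0}^{k} c_{n,k}(a)\, P_n(\Omega).
\]
To derive it, I would compute $1 + G([a](X))$ in two ways, using $1 + G(X) = \exp(\Omega \cdot \log_{\LT}(X)) = \sum_n P_n(\Omega) X^n$ recalled at the start of this section. On the one hand, $\log_{\LT}([a](X)) = a \cdot \log_{\LT}(X)$ yields $1 + G([a](X)) = \exp(a\Omega \cdot \log_{\LT}(X)) = \sum_k P_k(a\Omega) X^k$. On the other hand, substituting $X \mapsto [a](X)$ directly and expanding via $[a](X)^n = \sum_k c_{n,k}(a) X^k$ produces $\sum_k \bigl( \sum_n c_{n,k}(a) P_n(\Omega) \bigr) X^k$. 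Comparing coefficients of $X^k$ gives the identity, which incidentally exhibits $P_k(a\Omega)$ as an $o_F$-linear combination of the $P_n(\Omega)$ and hence as an element of $o_\infty$.

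For $P = P_k$, the proof is then a matter of unwinding definitions. Let $h = (\cK_1^*)^{-1}(f) \in \Hom_{o_F}(o_\infty, o_F)$, so that $h(P_n(\Omega)) = f_n$ is the $n$-th Taylor coefficient of $f$. The right-hand side is $h(P_k(a\Omega))$, and applying the $o_F$-linear map $h$ to the key identity yields $h(P_k(a\Omega)) = \sum_n c_{n,k}(a) f_n$. But $\sum_n c_{n,k}(a) f_n$ is precisely the coefficient of $X^k$ in $f([a](X)) = \sum_n f_n [a](X)^n$, which is the left-hand side. The main technical nuisance, rather than the algebraic content, is to verify that $f([a](X)) \in o_F\dcroc{X}^{\psi=0}$ so the left-hand pairing is defined by the original formula: for $a \in o_F^\times$ this follows from the commutation of $\psi$ with the $o_F^\times$-action on $o_F\dcroc{X}$, and for $a \notin o_F^\times$ one can instead work with the coefficient-extraction interpretation of the pairing.
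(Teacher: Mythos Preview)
Your argument is correct. The paper gives no self-contained proof here but simply cites \S 3.2 of \cite{AB24}; your direct verification via the identity $P_k(a\Omega) = \sum_{n} c_{n,k}(a)\, P_n(\Omega)$, obtained by expanding $1 + G([a](X)) = \exp(a\Omega \cdot \log_{\LT}(X))$ in two ways, is the natural approach and is essentially the content of the cited equation in \cite{AB24}.

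One remark on the technical point you flag at the end. For $a \notin o_F^\times$ the series $f([a](X))$ need not lie in $o_F\dcroc{X}^{\psi=0}$: for instance $a=0$ gives the constant $f(0)$, and nonzero constants are not $\psi$-killed (by lemma~\ref{formpsi}, $\psi_q(1)=1$). So the left-hand pairing is not literally covered by the definition given just before the lemma. Your fix --- read $\langle P_k(\Omega), g \rangle$ as the coefficient of $X^k$ in $g$ for any $g \in o_F\dcroc{X}$, and extend $F$-linearly in the first variable over the basis $\{P_k\}$ --- is exactly how the lemma is invoked in the proof of Proposition~\ref{cfnspan} (where $c_{f,k}(a) = \langle P_k(\Omega), f([a](X)) \rangle$ is simply the $k$-th coefficient of $f([a](X))$), so this is the intended interpretation and your handling of it is appropriate. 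For general $P$ with $P(\Omega)\in o_\infty$ one may equivalently note that $h=(\cK_1^*)^{-1}(f)$ extends $F$-linearly to $F[\Omega]$, whence both sides equal $h(P(a\Omega))$.
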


\begin{proof}
See \S 3.2 of \cite{AB24}, in particular equation (3) above definition 3.2.4.
\end{proof}

Let $B = F[\Omega] \cap o_\infty$ and pick a regular basis (definition 4.2.5 of \cite{AB24}) $\{ b_n(\Omega) \}_{n \geq 0}$ for $B$. Recall (lemma 4.2.8 of \cite{AB24}) that the polynomials $\rho_{i,k}(T) \in \Int$ are defined by $P_k(a \Omega) = \sum_{i=0}^k \rho_{i,k}(a) b_i(\Omega)$. As in \S 4.2 of \cite{AB24}, let $\Bc \subset \Int$ denote the $o_F$-span of the $\rho_{i,k}(T)$ with $i,k \geq 0$. We then have (corollary 4.2.19 of \cite{AB24}) the following criterion.

\begin{prop}
\label{bcrit}
We have $\Lambda_F(\frX) =o_F \dcroc{o_F}$ if and only if $\Bc = \Int$.
\end{prop}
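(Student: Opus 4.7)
The plan is to identify both $o_F \dcroc{o_F}$ and $\Lambda_F(\frX)$ as $o_F$-linear duals (of $\Int$ and of $\Bc$ respectively), in such a way that the natural injection $o_F \dcroc{o_F} \hookrightarrow \Lambda_F(\frX)$ becomes the restriction map dual to the inclusion $\Bc \hookrightarrow \Int$. Granted this picture, the proposition follows by a standard dualization argument over the DVR $o_F$: once one checks (directly from the definition of the $\rho_{i,k}$) that $\Bc$ and $\Int$ are $o_F$-lattices with the same $F$-span in $F[T]$, a Smith normal form argument shows that the restriction map $\Hom_{o_F}(\Int, o_F) \to \Hom_{o_F}(\Bc, o_F)$ is an isomorphism if and only if $\Bc = \Int$.

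For the first identification, $o_F \dcroc{o_F} \cong \Hom_{o_F}(\Int, o_F)$: a measure $\mu$ acts on $\Int \subseteq C(o_F, o_F)$ by integration, and since $\Int$ is uniformly dense in $C(o_F, o_F)$, this pairing is injective; conversely, any bounded $o_F$-linear functional on $\Int$ extends uniquely by continuity to a measure on $o_F$.

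The second identification, $\Lambda_F(\frX) \cong \Hom_{o_F}(\Bc, o_F)$, is the heart of the proof. By Theorem \ref{katzpair}, an element $f \in \Lambda_F(\frX) \subseteq o_F \dcroc{X}^{\psi=0}$ corresponds to some $h \in \Hom_{o_F}(o_\infty, o_F)$, and by the $p$-adic density of $B = F[\Omega] \cap o_\infty$ in $o_\infty$ this $h$ is determined by its values on the regular basis $\{b_n(\Omega)\}_{n \geq 0}$. Combining Lemma \ref{paireq} with the defining identity $P_k(a\Omega) = \sum_i \rho_{i,k}(a)\, b_i(\Omega)$ then yields
\[ c_{f,k}(a) = \langle P_k(\Omega), f([a](X))\rangle = \langle P_k(a\Omega), f\rangle = \sum_i h(b_i(\Omega))\, \rho_{i,k}(a), \]
exhibiting the data of $f$ as precisely that of an $o_F$-linear functional on $\Bc$ sending $\rho_{i,k}$ to $h(b_i(\Omega))$.

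Finally, one traces the inclusion $o_F \dcroc{o_F} \hookrightarrow \Lambda_F(\frX)$ through these identifications and observes that it sends $\mu \in \Hom_{o_F}(\Int, o_F)$ to the functional $\rho_{i,k} \mapsto \mu(\rho_{i,k})$ in $\Hom_{o_F}(\Bc, o_F)$, that is, to the restriction $\mu|_\Bc$; this is immediate by evaluating on the generators. The main obstacle I expect is the second identification: checking that the Schneider--Teitelbaum ``bounded-by-one over $F$'' condition defining $\Lambda_F(\frX)$ matches on the nose with the $o_F$-dual of $\Bc$, and not with the a priori larger $\Hom_{o_F}(B, o_F) \cong o_F \dcroc{X}^{\psi=0}$. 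This appears to require the detailed Fourier-theoretic analysis of \S 4.2 of \cite{AB24}.
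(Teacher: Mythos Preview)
The paper does not prove this proposition: it is simply quoted as corollary~4.2.19 of \cite{AB24}. Your overall strategy---realize both sides as $o_F$-duals and reduce to an elementary divisor argument---is indeed the shape of what is done in \cite{AB24}, and your first identification $o_F\dcroc{o_F}\cong\Hom_{o_F}(\Int,o_F)$ is correct and standard.

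There is, however, a genuine conflation in your treatment of the second identification. You write ``an element $f \in \Lambda_F(\frX) \subseteq o_F\dcroc{X}^{\psi=0}$,'' but no such inclusion has been established: elements of $\Lambda_F(\frX)$ are bounded $F$-rational functions on the character variety and, after trivializing $\frX$ over $\Cp$, sit in a Galois-twisted subring of $o_{\Cp}\dcroc{X}$, not in $o_F\dcroc{X}$. Theorem~\ref{katzpair} identifies $\Hom_{o_F}(o_\infty,o_F)$ with $o_F\dcroc{X}^{\psi=0}$, not with $\Lambda_F(\frX)$. Consequently the $h$ you extract and the displayed computation $c_{f,k}(a)=\sum_i h(b_i(\Omega))\,\rho_{i,k}(a)$ apply to $f\in o_F\dcroc{X}^{\psi=0}$; what you have in fact rederived is the calculation behind Proposition~\ref{cfnspan} (that $\Pol(o_F\dcroc{X}^{\psi=0})\subset\Bc$), not the statement $\Lambda_F(\frX)\cong\Hom_{o_F}(\Bc,o_F)$. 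You correctly flag at the end that matching the Schneider--Teitelbaum boundedness condition with the $o_F$-dual of $\Bc$ is the real obstacle and needs \S4.2 of \cite{AB24}; but as written, the ``heart of the proof'' paragraph does not engage with $\Lambda_F(\frX)$ at all, so the sketch is circular rather than merely incomplete.
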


Given this criterion, theorem A results from the following claim.

\begin{prop}
\label{cfnspan}
If $F = \QQ_{p^2}$, then $\Pol(o_F \dcroc{X}^{\psi=0}) = \Bc$.
\end{prop}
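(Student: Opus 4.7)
The plan is to prove the two inclusions of $\Pol(o_F\dcroc{X}^{\psi=0}) = \Bc$ separately, using the pairing $\langle\cdot,\cdot\rangle$ from the paragraph preceding Lemma \ref{paireq}—whose very definition relies on $F=\QQ_{p^2}$ through the surjectivity part of Theorem \ref{katzpair}—to translate between the coefficients $c_{f,n}$ and the regular-basis polynomials $\rho_{i,k}$. The key identities throughout are Lemma \ref{paireq}, $\langle P_n(\Omega), f([a](X))\rangle = \langle P_n(a\Omega), f(X)\rangle$, together with the normalization $\langle P_n(\Omega), \sum_i f_i X^i\rangle = f_n$.

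For the inclusion $\Pol(o_F\dcroc{X}^{\psi=0}) \subset \Bc$, I would take an arbitrary $f \in o_F\dcroc{X}^{\psi=0}$, write
\[ c_{f,n}(a) = \langle P_n(\Omega), f([a](X)) \rangle = \langle P_n(a\Omega), f(X) \rangle, \]
expand $P_n(a\Omega) = \sum_{i=0}^n \rho_{i,n}(a) b_i(\Omega)$ in the chosen regular basis of $B$, and pull the $o_F$-linear pairing through the finite sum to obtain
\[ c_{f,n}(a) = \sum_{i=0}^n \rho_{i,n}(a) \cdot \langle b_i(\Omega), f(X) \rangle. \]
Each scalar $\langle b_i(\Omega), f(X)\rangle$ lies in $o_F$, because $b_i(\Omega)\in o_\infty$ and $f\in o_F\dcroc{X}^{\psi=0}$, so this exhibits $c_{f,n}$ as an $o_F$-linear combination of the $\rho_{i,n}$ and hence places it in $\Bc$.

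For the reverse inclusion $\Bc \subset \Pol(o_F\dcroc{X}^{\psi=0})$, I would invoke the surjectivity of $\cK_1^*$ a second time. Fix $j \geq 0$, and let $h_j \in \Hom_{o_F}(o_\infty, o_F)$ be the dual coordinate picking out the $j$-th entry in the regular basis, so $h_j(b_i(\Omega)) = \delta_{i,j}$; then $f_j := \cK_1^*(h_j)$ lies in $o_F\dcroc{X}^{\psi=0}$ and satisfies $\langle b_i(\Omega), f_j(X)\rangle = \delta_{i,j}$ by the very definition of the pairing. Repeating the calculation above with $f_j$ in place of $f$ collapses the sum to the single term $\rho_{j,n}(a)$, so $c_{f_j,n}=\rho_{j,n}$ as elements of $\Int$, and every generator of $\Bc$ is therefore in $\Pol(o_F\dcroc{X}^{\psi=0})$.

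The one step that will require some care is the existence of the coordinate maps $h_j$ inside $\Hom_{o_F}(o_\infty,o_F)$; this amounts to checking that the coefficients in the regular-basis expansion of an element of $B$ extend continuously from $B$ to its $p$-adic completion $o_\infty$. I expect this to follow directly from the definition of \emph{regular basis} in definition 4.2.5 of \cite{AB24}, which ought to prescribe that every element of $o_\infty$ admits a unique convergent expansion $\sum_i a_i b_i(\Omega)$ with $a_i\in o_F$ and $a_i \to 0$. Once this is granted, the argument reduces to the chain of tautological equalities above, and the real content sits, as expected, inside Theorem \ref{katzpair}.
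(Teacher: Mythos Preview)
Your argument for the inclusion $\Pol(o_F\dcroc{X}^{\psi=0}) \subset \Bc$ is identical to the paper's. For the reverse inclusion the paper does something slightly less ambitious than you: rather than a single $h_j \in \Hom_{o_F}(o_\infty,o_F)$ with $h_j(b_i(\Omega))=\delta_{ij}$ for \emph{all} $i$, it fixes $k$ and only asks for an $h$ with $h(b_i(\Omega))=\delta_{ij}$ for $0\le i\le k$, which already gives $c_{h,k}=\rho_{j,k}$. The existence of this $h$ is justified by observing that $N=o_F\, b_0(\Omega)\oplus\cdots\oplus o_F\, b_k(\Omega)=F[\Omega]_{\le k}\cap o_\infty$ is a finitely generated \emph{pure} submodule of $o_\infty$, hence a direct summand (the paper cites Kaplansky), so the map $b_i(\Omega)\mapsto\delta_{ij}$ on $N$ extends to $o_\infty$.

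Your global $h_j$ does in fact exist, but not ``directly from the definition of regular basis'': a regular basis of $B$ is only an $o_F$-basis with $\deg b_n=n$, and this alone does not force the coordinate functionals to be bounded. What makes it work is again purity. If $z=\sum_i a_i\, b_i(\Omega)\in B$ has $v_\pi(z)\ge m$, then $z/\pi^m\in o_\infty\cap F[\Omega]=B$, and by uniqueness of the basis expansion over $F$ its coefficients $a_i/\pi^m$ must lie in $o_F$; hence $v_\pi(a_i)\ge m$. Thus each coordinate map $h_j:B\to o_F$ is norm-decreasing and extends to the closure $\overline{B}=o_\infty$ (density of $B$ follows from density of $F[\Omega]$ in $F_\infty$). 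So your route and the paper's rest on the same idea---purity of $F[\Omega]\cap o_\infty$ inside $o_\infty$---packaged finitarily in the paper and asymptotically in your sketch; the step you flagged is exactly where the content lies, and filling it amounts to the paper's key observation.
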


\begin{proof}
Recall that if $f(X) \in o_F \dcroc{X}^{\psi=0}$, we write $f([a](X)) = \sum_{n \geq 0} c_{f,n}(a) X^n$. We first prove that each $c_{f,n}$ is in $\Bc$. By lemma \ref{paireq}, $\langle P_k(\Omega) , f([a](X)) \rangle = \langle P_k(a \Omega) , f(X) \rangle$, so that 
\begin{multline*}
c_{f,k}(a) = \langle P_k(\Omega) , f([a](X)) \rangle 
= \langle P_k(a \Omega) , f(X) \rangle \\ 
= \langle \sum_{i=0}^k \rho_{ik}(a) b_i(\Omega), f(X) \rangle 
= \sum_{i=0}^k \rho_{ik}(a) \langle b_i(\Omega), f(X) \rangle,
\end{multline*}
and each $\langle b_i(\Omega), f(X) \rangle$ belongs to $o_F$ since $b_i(\Omega) \in o_\infty$. Hence $\Pol(o_F \dcroc{X}^{\psi=0}) \subset \Bc$.

To show equality, the above computation implies that it is enough to show that given $k \geq 0$ and $j \leq k$, there exists $f(X) \in o_F \dcroc{X}^{\psi=0}$ with $\langle b_i(\Omega), f(X) \rangle = \delta_{i,j}$ for $0 \leq i \leq k$. 

Let $N = o_F \cdot b_0(\Omega) + \cdots + o_F \cdot b_k(\Omega) = F[\Omega]_k \cap o_\infty$. The $o_F$-module $N$ is a finitely generated and pure submodule of the $o_F$-module $o_\infty$, hence a direct summand (see \S 16 of \cite{Kap69}, in particular exercise 57). The map $N \to o_F$ that sends $b_i(\Omega)$ to $\delta_{i,j}$ therefore extends to an $o_F$-linear map $h : o_\infty \to o_F$. We can now take $f(X) = \cK_1^*(h)$.
\end{proof}

\begin{rema}
\label{beyond}
If $F \neq \QQ_{p^2}$, let $M$ denote the image of the map $\cK_1^* : \Hom_{o_F}(o_\infty,o_F) \to o_F \dcroc{X}^{\psi=0}$. The proof of prop \ref{cfnspan} shows that $\Pol(M) = \Bc$ and hence that $\Lambda_F(\frX) =o_F \dcroc{o_F}$ if and only if $\Pol(M) = \Int$. It would therefore be interesting to compute $M$ in general. Another consequence of this is that if $\Lambda_F(\frX) =o_F \dcroc{o_F}$, then $\Pol(o_F \dcroc{X}^{\psi=0}) = \Int$.
\end{rema}

\section{Pol and Int modulo $\pi$}

In this {\S}, we prove theorem B. Let $F$ be a finite extension of $\Qp$. Recall that $k$ denotes the residue field of $o_F$. Let $B$ be a finite subset of $o_F$ and let $s : k \dcroc{X,Y} \to \prod_{b \in B} k \dcroc{X}$ be the substitution map $F(X,Y) \mapsto \{ F(X,[b](X)) \}_{b \in B}$. 

\begin{lemm}
\label{kersb}
We have $\ker s = \prod_{b \in B} (Y-[b](X)) \cdot  k \dcroc{X,Y}$. 
\end{lemm}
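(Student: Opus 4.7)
The plan is to exhibit $\ker s$ as an intersection of principal prime ideals in the UFD $k\dcroc{X,Y}$ and then use pairwise coprimality to identify this intersection with the product. The inclusion $\supseteq$ is immediate since, for each $b \in B$, the factor $Y-[b](X)$ vanishes under the substitution $Y \mapsto [b](X)$.

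For the reverse inclusion, I would factor $s$ through the evaluation maps $s_b \colon k\dcroc{X,Y} \to k\dcroc{X}$, $F(X,Y) \mapsto F(X,[b](X))$. The substitution $Y = T + [b](X)$ is a continuous automorphism of $k\dcroc{X,Y}$ carrying $Y - [b](X)$ to $T$, so $s_b$ identifies with the quotient by $(Y-[b](X))$; its kernel is therefore exactly $(Y - [b](X))$ and the quotient is isomorphic to $k\dcroc{X}$, a domain. Hence each $Y - [b](X)$ is a prime element, and $\ker s = \bigcap_{b \in B}(Y - [b](X))$.

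The main technical step is to show that these primes are pairwise non-associate for distinct $b_1, b_2 \in B$, which, since both generators are monic of degree one in $Y$, amounts to checking that $[b_1](X) \neq [b_2](X)$ in $k\dcroc{X}$. Using the formal group law, the relation $[b_1](X) \equiv [b_2](X) \pmod \pi$ is equivalent to $[b_1 - b_2](X) \equiv 0 \pmod \pi$, so it suffices to check that $[a](X) \not\equiv 0 \pmod \pi$ for every nonzero $a \in o_F$. Writing $a = \pi^n u$ with $u \in o_F^\times$ and using the Lubin--Tate relation $[\pi](X) \equiv X^q \pmod \pi$, I would obtain $[a](X) \equiv [u](X)^{q^n} \pmod \pi$, which is nonzero because $[u](X)$ has leading term $\bar u X$ with $\bar u \neq 0$. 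I expect this modular computation to be the only delicate point in the argument.

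To conclude, in the UFD $k\dcroc{X,Y}$ the intersection of pairwise coprime principal ideals equals the product of those ideals, yielding $\ker s = \prod_{b \in B}(Y - [b](X)) \cdot k\dcroc{X,Y}$, as required.
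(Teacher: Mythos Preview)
Your argument is correct, but it is more elaborate than the paper's. The paper proceeds by a direct division-with-remainder: if $F(X,[b](X))=0$, then $F(X,Y)=F(X,Y)-F(X,[b](X))$ is divisible by $Y-[b](X)$ (a factor-theorem computation), and one then inducts on $|B|$ using that $[b](X)\neq[b'](X)$ in $k\dcroc{X}$ for $b\neq b'$. You instead invoke the UFD structure of $k\dcroc{X,Y}$, identify each $\ker s_b$ as a principal prime via a change of variable, and turn the intersection into a product by coprimality. Both approaches ultimately hinge on the same fact, namely that $[b](X)\neq[b'](X)$ in $k\dcroc{X}$ when $b\neq b'$; the paper simply asserts this, whereas you supply a proof via $[a](X)\equiv [u](X)^{q^n}\pmod{\pi}$. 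Your route is cleaner conceptually and makes the role of the integrality of $k\dcroc{X}$ explicit, while the paper's argument is shorter and avoids appealing to unique factorisation.
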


\begin{proof}
If $F(X,[b](X)) = 0$, then $F(X,Y) = F(X,Y)  - F(X,[b](X)) = (Y - [b](X)) \cdot G(X,Y)$. This implies the claim by induction since $[b](X) \neq [b'](X)$ if $b \neq b'$.
\end{proof}

Let $d = |B|$ and let $I=(X,Y)^d$ so that $I$ is an open neighborhood of $\ker s$ in $k \dcroc{X,Y}$ and we have a well-defined and injective map $s : k \dcroc{X,Y} / I \to k \dcroc{X}^d / s(I)$. 

\begin{lemm}
\label{nfork}
There exists $n=n(B)$ having the property that if $f \in k \dcroc{X,Y}$ is such that $f(X,[b](X)) \in X^n k \dcroc{X}$ for all $b \in B$, then $f \in I$.
\end{lemm}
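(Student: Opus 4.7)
The plan is to exploit the fact that the map $s$ is $k\dcroc{X}$-linear and that $V := k\dcroc{X,Y}/I$ is finite-dimensional over $k$ (with basis the monomials $X^iY^j$, $i+j<d$). First I would check that $\ker s \subseteq I$: by Lemma~\ref{kersb}, $\ker s$ is generated by $\prod_{b \in B}(Y-[b](X))$, and each factor $Y-[b](X)$ lies in $(X,Y)$ because $[b](X) \in X \cdot k\dcroc{X}$; since $|B|=d$, the product lies in $(X,Y)^d = I$. Hence $s$ descends to an injection $\bar s\colon V \hookrightarrow k\dcroc{X}^d/s(I)$.

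Next I would use Noetherianity to control the image. The map $s$ is $k\dcroc{X}$-linear (acting on the source through the inclusion $k\dcroc{X} \hookrightarrow k\dcroc{X,Y}$ and on the target coordinatewise), so $s(I)$ is a $k\dcroc{X}$-submodule of the finitely generated free module $k\dcroc{X}^d$. Since $k\dcroc{X}$ is a complete local Noetherian ring with maximal ideal $(X)$, Krull's intersection theorem applied to the finitely generated quotient $k\dcroc{X}^d/s(I)$ yields
\[
\bigcap_{n \geq 0} X^n \cdot \bigl(k\dcroc{X}^d/s(I)\bigr) = 0.
\]

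Finally I would bring in finite-dimensionality to extract a single $n$. The subspaces $W_n := \bar s(V) \cap X^n(k\dcroc{X}^d/s(I))$ form a decreasing chain of $k$-subspaces of the finite-dimensional space $\bar s(V)$, and by the previous step their intersection is $0$; hence the chain must stabilize at $0$ for some $n = n(B)$. For this $n$, if $s(f) \in (X^n k\dcroc{X})^d$ then $\bar s(\bar f) \in W_n = 0$, and the injectivity of $\bar s$ forces $\bar f = 0$ in $V$, that is, $f \in I$.

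The main point I expect to require care is the closedness of $s(I)$ in the $X$-adic topology on $k\dcroc{X}^d$; the $k\dcroc{X}$-linearity of $s$ combined with Krull's intersection theorem supplies this cleanly. The rest is bookkeeping: the containment $\ker s \subseteq I$ is immediate from Lemma~\ref{kersb}, and passing to the finite-dimensional quotient $V$ is precisely what lets the descending-chain argument terminate in finitely many steps.
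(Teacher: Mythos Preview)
Your argument is correct, and it differs genuinely from the paper's. The paper exploits that $k$ is a \emph{finite} field: then $k\dcroc{X,Y}/I$ is a finite set (equivalently, $I$ is clopen and $k\dcroc{X,Y}$ is compact), and a short compactness/pigeonhole argument produces a single $f\notin I$ with $s(f)\in X^n k\dcroc{X}^d$ for all $n$, contradicting $\ker s\subset I$. Your route instead treats $s$ as a $k\dcroc{X}$-linear map, applies Krull's intersection theorem to the finitely generated module $k\dcroc{X}^d/s(I)$, and then uses the finite $k$-dimension of $V=k\dcroc{X,Y}/I$ to force the descending chain $W_n$ to hit zero. This avoids any appeal to the finiteness of $k$ and would work over an arbitrary field, at the cost of invoking Artin--Rees/Krull rather than a one-line pigeonhole. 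Both proofs ultimately hinge on the same two ingredients---$\ker s\subset I$ from Lemma~\ref{kersb} and the finite $k$-dimension of $V$---but package the passage from ``$\bigcap_n=0$'' to ``some $n$ works'' differently. One small remark: the closedness of $s(I)$ that you flag is in fact a \emph{consequence} of Krull's theorem rather than a hypothesis you need to check separately; the intersection statement holds for any finitely generated quotient.
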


\begin{proof}
If there is no such $n$, then for all $n$ there is an $f_n$ contradicting the lemma. Since $k \dcroc{X,Y} / I$ is a finite set, there is an $f$ not in $I$ such that $f(X,[b](X)) \in X^n k \dcroc{X}$ for all $b \in B$ and infinitely many $n$, so that $f(X,[b](X)) = 0$ for all $b \in B$. Hence $f \in \ker s \subset I$ by lemma \ref{kersb}.
\end{proof}

\begin{coro}
\label{injevt}
The map $s : k \dcroc{X,Y} / I \to k \dcroc{X}^d / (X^n + s(I))$ is injective.
\end{coro}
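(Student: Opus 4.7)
The plan is to reduce Corollary \ref{injevt} to Lemma \ref{nfork} by a direct unpacking of the quotient. Here "$X^n + s(I)$" means the submodule $X^n k \dcroc{X}^d + s(I) \subset k \dcroc{X}^d$, and injectivity amounts to showing that whenever $f \in k \dcroc{X,Y}$ satisfies $s(f) \in X^n k \dcroc{X}^d + s(I)$, we have $f \in I$.

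So I would start by writing $s(f) = X^n \cdot g + s(h)$ for some $g \in k \dcroc{X}^d$ and some $h \in I$. By $k$-linearity of $s$ this gives $s(f - h) = X^n \cdot g$, which componentwise says that $(f-h)(X,[b](X)) \in X^n k \dcroc{X}$ for every $b \in B$. Now Lemma \ref{nfork} applies verbatim to $f - h$ and yields $f - h \in I$; since $h \in I$ already, we conclude $f \in I$, which is exactly what injectivity of $s : k \dcroc{X,Y}/I \to k \dcroc{X}^d / (X^n k\dcroc{X}^d + s(I))$ requires.

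There is essentially no obstacle here: the real content was packaged into Lemma \ref{nfork} (via the finiteness of $k \dcroc{X,Y}/I$ and Lemma \ref{kersb}), and the corollary is the clean quotient-level reformulation. The only thing to be a little careful about is the interpretation of the notation $X^n + s(I)$, but once this is read as the submodule $X^n k \dcroc{X}^d + s(I)$, the argument is a one-line diagram chase.
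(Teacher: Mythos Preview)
Your proof is correct and is essentially the same as the paper's: both write $s(f) = X^n g + s(h)$ with $h \in I$, apply Lemma~\ref{nfork} to $f-h$, and conclude $f \in I$. Your clarification that ``$X^n + s(I)$'' denotes the submodule $X^n k\dcroc{X}^d + s(I)$ is a useful expository remark.
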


\begin{proof}
If $f \in k \dcroc{X,Y}$ is such that $s(f) = X^n g + s(i) \in X^n k \dcroc{X}^d + s(I)$, the above lemma applied to $f-i$ shows that $f-i \in I$.
\end{proof}

If $h(X) \in k \dcroc{X}$, let $\langle h(X) \vert X^j \rangle \in k$ denote the coefficient of $X^j$ in $h(X)$. If $d \geq 1$ and  $m+\ell < d$, the coefficient of $X^m Y^\ell$ in $F(X,Y) \in k \dcroc{X,Y} / I$ is well defined.

\begin{prop}
\label{coeff}
If $d \geq 1$, and $B=\{b_1,\hdots,b_d\}$ and $n=n(B)$ is as above, and if $m+\ell < d$, there exist some $\mu_{i,j} \in k$ for $1 \leq i \leq d$ and $0 \leq j \leq n-1$, such that for all $F(X,Y) \in k \dcroc{X,Y}$, the coefficient of $X^m Y^\ell$ in $F(X,Y)$ is equal to \[ \sum_{i=1}^d \sum_{j=0}^{n-1} \mu_{i,j} \cdot \langle F(X,[b_i](X)) \vert X^j \rangle. \]
\end{prop}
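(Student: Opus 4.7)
The plan is to interpret Proposition \ref{coeff} as a duality statement: the ``coefficient of $X^m Y^\ell$'' functional on the finite-dimensional quotient $k\dcroc{X,Y}/I$ should factor through the map induced by $s$ modulo $X^n$, and this will follow from the injectivity already established in Lemma \ref{nfork} (equivalently, Corollary \ref{injevt}).

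First I would set up the finite-dimensional vector spaces involved. Let $V = k\dcroc{X,Y}/I$, which has basis $\{X^a Y^b : a+b < d\}$. When $m+\ell < d$, the ``coefficient of $X^m Y^\ell$'' map $\alpha_{m,\ell} : V \to k$ is a well-defined $k$-linear form. Let $W = k\dcroc{X}^d / X^n k\dcroc{X}^d$; this has basis indexed by pairs $(i,j)$ with $1 \leq i \leq d$ and $0 \leq j \leq n-1$, and the corresponding dual basis is precisely the family of forms $\beta_{i,j}(h_1, \ldots, h_d) = \langle h_i \vert X^j \rangle$. The map $s$ descends to a $k$-linear map $\bar s : V \to W$.

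The crucial input is the injectivity of $\bar s$, which is immediate from Lemma \ref{nfork}: if $\bar s(\bar f) = 0$ for some lift $f \in k\dcroc{X,Y}$, then $f(X, [b_i](X)) \in X^n k\dcroc{X}$ for every $i$, hence $f \in I$. Alternatively, one can view this as Corollary \ref{injevt} composed with the further quotient $k\dcroc{X}^d/X^n k\dcroc{X}^d \to k\dcroc{X}^d/(X^n k\dcroc{X}^d + s(I))$.

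Finally, injectivity of $\bar s$ between finite-dimensional $k$-vector spaces implies that the transpose $\bar s^* : W^* \to V^*$ is surjective. Thus $\alpha_{m,\ell}$ lies in the image, so there exist scalars $\mu_{i,j} \in k$ with
\[
\alpha_{m,\ell} = \bar s^*\Bigl(\sum_{i,j} \mu_{i,j}\, \beta_{i,j}\Bigr) = \sum_{i,j} \mu_{i,j}\, (\beta_{i,j} \circ \bar s).
\]
Unwinding the definitions gives $(\beta_{i,j} \circ \bar s)(F) = \langle F(X, [b_i](X)) \vert X^j \rangle$, which is exactly the formula in the proposition. I do not expect any real obstacle: all the essential content is already in Lemmas \ref{kersb} and \ref{nfork}, and the remainder is pure linear duality on finite-dimensional $k$-vector spaces.
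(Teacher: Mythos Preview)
Your proposal is correct and essentially identical to the paper's proof: both pass to the finite-dimensional quotient $k\dcroc{X,Y}/I$, use the injectivity from Lemma~\ref{nfork}, and then invoke the elementary linear-algebra fact that an injective map of finite-dimensional $k$-spaces has surjective transpose (the paper phrases this as extending $h_{m,\ell}\circ s^{-1}$ from $s(M)$ to a larger space). The only cosmetic difference is that the paper routes through the intermediate quotient $N = k\dcroc{X}^d/(X^n + s(I))$ before lifting to $k\dcroc{X}^d/X^n$, whereas you work directly with $W = k\dcroc{X}^d/X^n$; both are fine.
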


\begin{proof}
Let $M = k \dcroc{X,Y} / I$ and $N = k \dcroc{X}^d / (X^n + s(I))$; they are both finite dimensional $k$-vector spaces. Let $h_{m,\ell} : M \to k$ be the linear form giving the coefficient of $X^m Y^\ell$ in $F(X,Y) \bmod{I}$. Consider the injective map (lemma \ref{injevt}) $s : M \to N$. The linear form $h_{m,\ell} \circ s^{-1} : s(M) \to k$ extends to a linear form $\lambda : N \to k$ which in turn gives rise to a linear form $\mu :  k \dcroc{X}^d / X^n \to k$ factoring through $N$. 

There exist some $\mu_{i,j} \in k$ such that if $f=(f_1,\hdots,f_d) \in k \dcroc{X}^d / X^n$, then $\mu(f) = \sum_{i=1}^d \sum_{j=0}^{n-1} \mu_{i,j} \cdot \langle f_i(X) \vert X^j \rangle$. If $F \in k \dcroc{X,Y} / I$, we have 
\begin{align*} 
h_{m,\ell}(F) & = \lambda \circ s (F) \\
& =  \mu\left( F(X,[b_1](X)),\hdots,F(X,[b_d](X)) \right) \\
& = \sum_{i=1}^d \sum_{j=0}^{n-1} \mu_{i,j} \cdot \langle F(X,[b_i](X)) \vert X^j \rangle. \qedhere 
\end{align*}
\end{proof}

\begin{lemm}
\label{psibpi}
If $f(X) \in o_F \dcroc{X}^{\psi=0}$, $g(X) \in o_F \dcroc{X}$ and $b \in \pi \cdot o_F$, then $f(X)g([b](X)) \in o_F \dcroc{X}^{\psi=0}$.
\end{lemm}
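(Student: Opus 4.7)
The plan is to use the characterization of $\psi=0$ stated in the introduction: a power series $h(X) \in o_F \dcroc{X}$ satisfies $\psi(h) = 0$ if and only if
\[ \sum_{\eta \in \MM_{\Cp},\ [\pi](\eta)=0} h(X \oplus \eta) = 0. \]
So it suffices to show that the sum over $\pi$-torsion $\eta$ of $f(X \oplus \eta) \cdot g([b](X \oplus \eta))$ vanishes.

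The main point is that $g([b](X \oplus \eta))$ does not depend on $\eta$. Indeed, since $[b]$ is an endomorphism of $\LT$, we have $[b](X \oplus \eta) = [b](X) \oplus [b](\eta)$. Writing $b = \pi b'$ with $b' \in o_F$, we get $[b](\eta) = [b']([\pi](\eta)) = [b'](0) = 0$, so $[b](X \oplus \eta) = [b](X)$.

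With this, the sum factors:
\[ \sum_{\eta} f(X \oplus \eta) \cdot g([b](X \oplus \eta)) \;=\; g([b](X)) \cdot \sum_{\eta} f(X \oplus \eta) \;=\; 0, \]
using $\psi(f)=0$ for the last equality. Hence $\psi(f \cdot g \circ [b]) = 0$, which is what we wanted. There is no real obstacle here: the lemma is essentially immediate from the ``sum over torsion'' characterization of $\psi=0$ combined with the fact that $\pi$-torsion points are killed by $[b]$ when $\pi \mid b$.
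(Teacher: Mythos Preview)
Your argument is correct and is exactly the paper's proof: both use the ``sum over $\pi$-torsion'' criterion for $\psi=0$ and factor $g([b](X))$ out of the sum after observing that $[b](\eta)=0$ when $\pi\mid b$. You simply spell out the step $[b](X\oplus\eta)=[b](X)\oplus[b](\eta)=[b](X)$ that the paper leaves implicit.
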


\begin{proof}
This follows from
\[ \sum_{\eta \in \MM_{\Cp},[\pi](\eta)=0} f(X \oplus \eta)g([b](X \oplus \eta)) = g([b](X))\sum_{\eta \in \MM_{\Cp},[\pi](\eta)=0} f(X \oplus \eta)=0.
\qedhere \]
\end{proof}

\begin{theo}
\label{polring}
If $f,g \in o_F \dcroc{X}^{\psi=0}$ and $m,\ell \geq 0$ then $c_{f,m}(T) \cdot c_{g,\ell}(T) \in \Pol(o_F \dcroc{X}^{\psi=0})/\pi$.
\end{theo}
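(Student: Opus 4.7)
The plan is to read $c_{f,m}(a) \cdot c_{g,\ell}(a)$ as the $X^m Y^\ell$-coefficient of the two-variable series
\[ F_a(X,Y) := f([a](X)) \cdot g([a](Y)) \in o_F \dcroc{X,Y}, \]
and then to apply Proposition \ref{coeff} (after reducing mod $\pi$) to recover that coefficient as a $k$-linear combination of coefficients of series of the form $F_a(X,[b_i](X))$ for a carefully chosen finite set $B = \{b_1,\ldots,b_d\} \subset o_F$. The point is that if we arrange $b_i \in \pi o_F$, then these restricted series will be precisely the images under $a \mapsto [a](X)$ of elements of $o_F \dcroc{X}^{\psi=0}$, and their coefficients are themselves values of $c_{h_i,j}$ for some $h_i \in o_F \dcroc{X}^{\psi=0}$.

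Concretely, I would set $d = m+\ell+1$, pick any $d$ distinct elements $b_1,\ldots,b_d \in \pi o_F$ (possible since $\pi o_F$ is infinite), let $n = n(B)$, and let $\mu_{i,j} \in k$ be the constants provided by Proposition \ref{coeff} applied to the pair $(m,\ell)$; these depend only on $B$ and $(m,\ell)$, not on the input series. Define $h_i(X) := f(X) \cdot g([b_i](X))$, which lies in $o_F \dcroc{X}^{\psi=0}$ by Lemma \ref{psibpi} since $b_i \in \pi o_F$. The crucial one-line calculation is commutativity of the Lubin--Tate endomorphisms: for every $a \in o_F$,
\[ F_a(X,[b_i](X)) = f([a](X)) \cdot g([ab_i](X)) = f([a](X)) \cdot g([b_i]([a](X))) = h_i([a](X)), \]
so $\langle F_a(X,[b_i](X)) \mid X^j \rangle = c_{h_i,j}(a)$.

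Applying Proposition \ref{coeff} to the mod $\pi$ reduction of $F_a$ and choosing any lifts $\tilde\mu_{i,j} \in o_F$ of the $\mu_{i,j}$, I obtain
\[ c_{f,m}(a) \cdot c_{g,\ell}(a) \equiv \sum_{i=1}^d \sum_{j=0}^{n-1} \tilde\mu_{i,j} \cdot c_{h_i,j}(a) \pmod{\pi} \]
for every $a \in o_F$. Since $o_F$ is infinite, the difference $c_{f,m}(T) \cdot c_{g,\ell}(T) - \sum_{i,j} \tilde\mu_{i,j} c_{h_i,j}(T)$ is a polynomial in $\Int$ all of whose values on $o_F$ lie in $\pi o_F$; dividing by $\pi$ produces a polynomial in $F[T]$ sending $o_F$ into $o_F$, hence the difference lies in $\pi \cdot \Int$. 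This gives exactly $c_{f,m} \cdot c_{g,\ell} \in \Pol(o_F \dcroc{X}^{\psi=0})/\pi$. I do not expect a real obstacle here: the argument is a direct assembly of Proposition \ref{coeff}, Lemma \ref{psibpi}, and commutativity of $[a]$ and $[b_i]$; the only delicate choice is arranging $b_i \in \pi o_F$ so that $\psi$-vanishing of $h_i$ is preserved.
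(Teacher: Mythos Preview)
Your proposal is correct and follows essentially the same route as the paper's proof: both choose $B\subset\pi o_F$ with $|B|>m+\ell$, set $h_i(X)=f(X)\cdot g([b_i](X))\in o_F\dcroc{X}^{\psi=0}$ via Lemma~\ref{psibpi}, use commutativity of $[a]$ and $[b_i]$ to identify $F_a(X,[b_i](X))=h_i([a](X))$, and then invoke Proposition~\ref{coeff} to express $c_{f,m}(a)c_{g,\ell}(a)$ mod~$\pi$ as a $k$-linear combination of the $c_{h_i,j}(a)$. Your final paragraph spelling out why an element of $\Int$ with all values in $\pi o_F$ lies in $\pi\cdot\Int$ is a bit more explicit than the paper's terse conclusion, but the argument is the same.
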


\begin{proof}
If $a \in o_F$, then $c_{f,m}(a) \cdot c_{g,\ell}(a)$ is the coefficient of $X^m Y^\ell$ in $f([a](X)) \cdot g([a](Y))$. Choose $d > m+\ell$ and $B \subset \pi \cdot o_F$ with $|B| = d$.

If $H(X,Y) \in k \dcroc{X,Y}$ and $h_b(X) = H(X,[b](X))$ for $b \in o_F$, then
\[ H([a](X),[a]([b](X))) = H([a](X),[b]([a](X))) = h_b([a](X)). \]
Take $H(X,Y) = f(X)\cdot g(Y)$ and let $F(X,Y) = f([a](X)) \cdot g([a](Y)) = H([a](X),[a](Y))$. Since $B \subset \pi \cdot o_F$, lemma \ref{psibpi} implies that $h_b(X) = f(X) \cdot g([b](X))$ belongs to $o_F \dcroc{X}^{\psi=0}$. 
By prop \ref{coeff}, the coefficient of $X^m Y^\ell$ in $F(X,Y)$ is $\sum_{i=1}^d \sum_{j=0}^{n-1} \mu_{i,j} \cdot \langle h_{b_i}([a](X)) \vert X^j \rangle$, and hence $c_{f,m}(T) \cdot c_{g,\ell}(T) = \sum_{i=1}^d \sum_{j=0}^{n-1} \mu_{i,j} \cdot c_{h_{b_i},j}(T) \in \Pol(o_F \dcroc{X}^{\psi=0})/\pi$.
\end{proof}

Recall that $[a](X) = \sum_{n \geq 1} c_{1,n}(a) X^n$ with $c_{1,n}(T) \in \Int$, and that a Mahler basis for $o_F$ is a regular basis of $\Int$.

\begin{lemm}
\label{dsi}
The functions $c_{1,q^k}$ are part of a Mahler basis for $o_F$, and the $o_F$-algebra $\Int$ is generated by the $c_{1,q^k}$ for $k \geq 0$.
\end{lemm}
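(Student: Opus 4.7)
The plan is to show both claims simultaneously by exhibiting an explicit regular basis of $\Int$ whose elements are products of the $c_{1,q^k}$. For $n \geq 0$ with $q$-adic digit expansion $n = \sum_{k \geq 0} n_k q^k$ (where $0 \leq n_k \leq q-1$), I would set $M_n(T) = \prod_{k \geq 0} c_{1,q^k}(T)^{n_k}$ and show that $\{M_n\}_{n \geq 0}$ is a regular basis of $\Int$. Granting this, the first claim follows because $M_{q^k} = c_{1,q^k}$, and the second follows because each $M_n$ is a polynomial expression in the $c_{1,q^k}$ while the $M_n$ span $\Int$ as an $o_F$-module.

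To verify that $\{M_n\}$ is a regular basis, I would first establish that $\deg c_{1,q^k}(T) = q^k$. Starting from the Lubin--Tate congruence $[\pi](X) \equiv X^q \pmod \pi$ and iterating gives $[\pi^k](X) \equiv X^{q^k} \pmod \pi$. Using the composition law $[a\pi^k](X) = [a]([\pi^k](X))$ and extracting the coefficient of $X^{q^k}$ yields a relation of the form $c_{1,q^k}(a\pi^k) \equiv a \pmod \pi$ for all $a \in o_F$, which forces $\deg c_{1,q^k} \geq q^k$. The matching upper bound comes from a direct estimate on the coefficients of $[a](X)$ (degree grows at most linearly in the index). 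Consequently $\deg M_n = n$ for every $n$.

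Second, I would analyze the leading coefficient of $c_{1,q^k}(T)$. The key input from \cite{SI09} is that this leading coefficient is $1/d_{q^k}$ for an element $d_{q^k} \in F$ whose $\pi$-adic valuation equals that of the leading denominator of the degree-$q^k$ polynomial in a fixed regular basis of $\Int$ (the Lubin--Tate analog, for $o_F$, of the $v_p(n!)$ appearing in the classical binomial Mahler basis on $\Zp$). Multiplicativity of these denominators along the $q$-adic digit decomposition then gives the corresponding statement for $M_n$ at every $n$, so the transition matrix from $\{M_n\}$ to a fixed regular basis is upper triangular with unit diagonal. Hence $\{M_n\}$ is itself a regular basis of $\Int$.

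The main obstacle is the second step: pinning down the precise $\pi$-adic denominator of $c_{1,q^k}(T)$ and checking the multiplicativity of these denominators along the base-$q$ digit decomposition of $n$. This is the substance of the Lubin--Tate computation carried out in \cite{SI09}, which I would invoke rather than re-derive.
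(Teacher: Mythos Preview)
Your proposal is correct and aligns with the paper's approach: the paper's proof consists solely of a citation to theorem 3.1 of \cite{SI09}, and your plan is precisely an outline of the argument in that reference (the products $M_n = \prod_k c_{1,q^k}^{n_k}$ form a regular basis, via the degree and leading-coefficient valuation computations you describe), with the core valuation step ultimately deferred to \cite{SI09} as well.
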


\begin{proof}
See theorem 3.1 of \cite{SI09}.
\end{proof}

\begin{lemm}
\label{formpsi}
If $X$ is a coordinate on $\LT$ such that $[\pi](X)=\pi X+X^q$, then
\[ \psi(1) = \frac{q}{\pi}, \qquad
\psi(X^i) =0 \text{ if $1 \leq i \leq q-2$}, \qquad
\psi(X^{q-1}) = 1-q, \]
and 
\[ \psi(X^i)=X\cdot \psi(X^{i-q})-\pi\cdot \psi(X^{i-q+1}),\text{ for }i\geq q. \]
\end{lemm}
 
\begin{proof}
See the proof of \cite[Prop 2.2]{FX13}, noting that their $\psi$ is $\pi/q$ times our $\psi$.
\end{proof}

\begin{lemm}
\label{cqkinpsizero}
	For all $k\geq 0$, we have $c_{1,q^k}\in \Pol(o_F \dcroc{X}^{\psi=0})+\pi \cdot \Int$. In fact, we even have $c_{1,q^k}\in  \Pol(o_F \dcroc{X}^{\psi=0})$ if $q\neq 2$ or if $F=\QQ_2$.
\end{lemm}

\begin{proof}
For $q\neq 2$, Lemma \ref{formpsi} shows $\psi(X)=0$ and the claim follows. If $F=\QQ_2$ and $f(X)=X-(1-q)\frac{\pi}{q}$, then Lemma \ref{formpsi} shows $\psi(f)=0$ and the claim follows from $c_{f,q^k}=c_{1,q^k}$. If $q=2$ and $F \neq \QQ_2$, 
we have according to Lemma \ref{formpsi}
\[ \psi(X^2) = \frac{q}{\pi}X-\pi(1-q), \qquad \psi(X^3) = (1-2q)\cdot X + \pi^2(1-q). \]
Hence if $f(X) =X^2-\frac{q}{\pi(1-2q)}X^3+\pi\left(1+\frac{q}{1-2q}\right)X$, then $\psi(f)=0$. Note that $q/\pi \equiv 0 \mod \pi$ so that $f(X) \equiv X^2 \mod \pi$. The residue field of $F$  is $\mathbf{F}_2$, so the claim follows from $c_{f,q^{k+1}}\equiv c_{2,2\cdot q^{k}}\equiv c_{1,q^k}^2\equiv c_{1,q^k} \mod \pi \cdot \Int$.
\end{proof}

\begin{proof}[Proof of theorem B]
Note that $\Pol(o_F \dcroc{X}^{\psi=0})$ is independent of the choice of coordinate $X$ on $\LT$. We choose one such that $[\pi](X)=\pi X+X^q$. By lemma \ref{formpsi}, $1-q/(\pi (1-q)) \cdot X^{q-1}$ belongs to $o_F \dcroc{X}^{\psi=0}$ so that $1 \in \Pol(o_F \dcroc{X}^{\psi=0})$. Lemma \ref{cqkinpsizero} shows that $c_{1,q^k}\in \Pol(o_F \dcroc{X}^{\psi=0})+\pi\cdot \Int$ for all $k\geq 0$. Theorem B now follows from lemma \ref{dsi}, and from theorem \ref{polring}.
\end{proof}

Another proof of theorem B using completely different ideas is sketched below. It is based on arguments of Konstantin Ardakov for proving that $\Pol(o_F \dcroc{X}) + \pi \cdot \Int = \Int$. Let $M$ be either $o_F \dcroc{X}$ or $o_F \dcroc{X}^{\psi=0}$. 

\begin{lemm}
\label{pollin}
If $b \in o_F^\times$, then $\Pol(M)$ is stable under $P(T) \mapsto P(b \cdot T)$.
\end{lemm}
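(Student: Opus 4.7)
The plan is to reduce the lemma to showing that the module $M$ is itself stable under the endomorphism $f(X) \mapsto f([b](X))$ when $b \in o_F^\times$. The key observation is the following identity: for any $f(X) \in o_F\dcroc{X}$, set $g(X) = f([b](X))$; then
\[ g([a](X)) = f([b]([a](X))) = f([ba](X)) = \sum_{n \geq 0} c_{f,n}(ba)\, X^n, \]
so that $c_{g,n}(T) = c_{f,n}(bT)$ for all $n \geq 0$. Thus, provided that $f \in M$ implies $f \circ [b] \in M$, every generator $c_{f,n}(T)$ of $\Pol(M)$ is mapped to a generator $c_{g,n}(T)$ of $\Pol(M)$, and hence the $o_F$-linear map $P(T) \mapsto P(bT)$ preserves $\Pol(M)$.

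It remains to verify the stability of $M$ under $f \mapsto f \circ [b]$. For $M = o_F\dcroc{X}$, this is immediate since $[b](X) \in o_F\dcroc{X}$. For $M = o_F\dcroc{X}^{\psi=0}$, I would use the characterization recalled in the introduction, namely that $\psi(h) = 0$ if and only if $\sum_{\eta \in \LT[\pi]} h(X \oplus \eta) = 0$, where the sum runs over the $\pi$-torsion points of $\LT$ in $\MM_{\Cp}$. Computing with $h(X) = f([b](X))$ gives
\[ \sum_{\eta \in \LT[\pi]} f([b](X \oplus \eta)) = \sum_{\eta \in \LT[\pi]} f([b](X) \oplus [b](\eta)). \]
The main (and essentially only) point of the argument is that because $b \in o_F^\times$, the endomorphism $[b]$ is an automorphism of $\LT$, so the map $\eta \mapsto [b](\eta)$ is a bijection on the finite set $\LT[\pi]$. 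Reindexing the sum then gives $\sum_{\eta' \in \LT[\pi]} f([b](X) \oplus \eta')$, which vanishes since $\psi(f) = 0$.

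No step here is really an obstacle; the content is the identification $c_{f,n}(bT) = c_{f\circ[b],n}(T)$ together with the observation that $[b]$ acts by permutation on $\LT[\pi]$ when $b$ is a unit. This is also exactly the place where the hypothesis $b \in o_F^\times$ (as opposed to $b \in o_F$) is used: if $b$ is not a unit, $[b]$ collapses part of $\LT[\pi]$ and the $\psi=0$ condition is not preserved by $f \mapsto f \circ [b]$.
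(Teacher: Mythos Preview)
Your proof is correct and follows the same approach as the paper: the paper's one-line argument is precisely that $c_{f\circ[b],n}(a) = c_{f,n}(ba)$ and that $f \circ [b] \in M$ whenever $f \in M$ and $b \in o_F^\times$. You simply supply the details the paper leaves implicit, in particular the verification via the torsion-point characterization that $o_F\dcroc{X}^{\psi=0}$ is preserved by composition with $[b]$.
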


\begin{proof}
We have $c_{f \circ [b],n} (a) = c_{f,n}(ba)$ and if $f \in M$ and $b \in o_F^\times$, then $f \circ [b] \in M$.
\end{proof}

Let $\partial = \log_{\LT}'(X)^{-1} \cdot d/dX$ be the normalized invariant differential on $F \dcroc{X}$. Recall (see \S1 of \cite{Kat81}) that if $f(X) \in F \dcroc{X}$, then $f(X \oplus H) = \sum_{n \geq 0} P_n(\partial)(f(X)) \cdot H^n$. 

\begin{lemm}
\label{poltrn}
If $b \in o_F$, then $\Pol(M)$ is stable under $P(T) \mapsto P(T + b)$.
\end{lemm}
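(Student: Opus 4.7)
The plan is to turn the arithmetic translation $T \mapsto T+b$ into a formal-group ``translation'' on $M$, by mimicking the structure of the proof of lemma \ref{pollin}. The central gadget is the two-variable expansion
\[ f(U \oplus V) = \sum_{n \geq 0} H_n(U) \cdot V^n \qquad \text{with } H_n \in o_F \dcroc{U}. \]
This exists in $o_F \dcroc{U,V}$ because $U \oplus V$ has no constant term, so $f(U \oplus V) \in o_F \dcroc{U,V}$; one has $H_n = P_n(\partial)(f)$, and the integrality of $H_n$ is a consequence of the formal group structure.

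The first step is to check that $H_n \in M$ whenever $f \in M$. This is automatic for $M = o_F \dcroc{X}$. For $M = o_F \dcroc{X}^{\psi=0}$, substituting $U \mapsto U \oplus \eta$ in the defining identity and using associativity of $\oplus$ give $H_n(U \oplus \eta)$ as the coefficient of $V^n$ in $f\bigl((U \oplus V) \oplus \eta\bigr)$, so
\[ \sum_{\eta : [\pi](\eta) = 0} H_n(U \oplus \eta) = [V^n] \sum_{\eta} f\bigl((U \oplus V) \oplus \eta\bigr) = 0 \]
since the inner sum vanishes by $\psi(f) = 0$ (applied with argument $U \oplus V$). Hence $H_n$ is killed by $\psi$.

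The second step invokes the Lubin--Tate addition formula $[a+b](X) = [a](X) \oplus [b](X)$. Substituting $U = [a](X)$ and $V = [b](X)$, which is legitimate since both have zero constant term, yields
\[ f([a+b](X)) = \sum_{n \geq 0} H_n([a](X)) \cdot [b](X)^n. \]
Writing $[b](X)^n = \sum_{k \geq n} \beta_{n,k} X^k$ with $\beta_{n,k} \in o_F$ and extracting the coefficient of $X^N$ on both sides gives the polynomial identity
\[ c_{f, N}(T + b) = \sum_{n = 0}^{N} \sum_{k = n}^{N} \beta_{n, k} \cdot c_{H_n, N - k}(T), \]
which expresses $c_{f, N}(T+b)$ as a finite $o_F$-linear combination of $c_{g, m}$ with $g = H_n \in M$, hence in $\Pol(M)$.

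The only subtle point is the first step in the $\psi = 0$ case: one must know that the finite $\eta$-sum commutes with coefficient extraction in $V$ and that associativity of $\oplus$ lets one absorb the $\eta$ shift into the argument of $f$ after $V$. Once this is observed the rest is pure bookkeeping with the formal group law, so I expect the whole proof to be quite short.
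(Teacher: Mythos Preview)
Your proof is correct and follows essentially the same route as the paper's: expand $f(U \oplus V) = \sum_{n \geq 0} H_n(U)\,V^n$ with $H_n = P_n(\partial)(f)$, check that each $H_n$ lies in $M$, substitute $U = [a](X)$ and $V = [b](X)$ via $[a+b](X) = [a](X)\oplus[b](X)$, and extract the coefficient of $X^N$. The only difference is in the verification that $\psi(H_n)=0$: the paper invokes the commutation relation $\partial \circ \psi = \pi^{-1}\,\psi \circ \partial$, while you argue directly from the characterization $\psi(f)=0 \Leftrightarrow \sum_\eta f(X\oplus\eta)=0$ together with associativity of $\oplus$, which is an equally clean (and perhaps more self-contained) way to see it.
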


\begin{proof}
We first check that $M$ is stable under $P_n(\partial)$ for all $n \geq 0$. We have $f(X \oplus H) = \sum_{n \geq 0} P_n(\partial)(f(X)) \cdot H^n$ and $P_n(\partial)(f(X))$ belongs to $o_F \dcroc{X}$ as $f(X \oplus H) \in o_F \dcroc{X,H}$. Finally $\partial \circ \psi = \pi^{-1} \psi \circ \partial$ so that if $\psi(f)=0$ then $\psi(P_n(\partial)f)=0$ as well.

If $f(X) \in M$, then $f([a+b](X)) = \sum_{i \geq 0} c_{f,i}(a+b) X^i$. On the other hand,
 \[ f([a+b](X)) = f([a](X) \oplus [b](X)) = \sum_{n \geq 0} P_n(\partial)(f) ([a](X)) \cdot [b](X)^n. \]
 This implies that 
 \[
 c_{f,i}(T+b) = \sum_{\substack{\ell + m = i,\\ 0 \leq n \leq m}} c_{P_n(\partial)(f),\ell}(T) c_{n,m}(b). \qedhere
 \]
\end{proof}

\begin{prop}
\label{polbig}
{\ }
\begin{enumerate}
\item The image of $\Pol(o_F \dcroc{X}^{\psi=0})$ in $\Int / \pi \cdot \Int$ is infinite dimensional.
\item We have $\Pol(o_F \dcroc{X}^{\psi=0})+ \pi \cdot \Int = \Int$.
\end{enumerate}
\end{prop}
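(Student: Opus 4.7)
The plan is to deduce part (1) immediately from the first proof of Theorem B combined with Lemma \ref{dsi}, and to bootstrap part (2) using the affine invariance supplied by Lemmas \ref{pollin} and \ref{poltrn}. Explicitly, the first proof of Theorem B exhibits $c_{1,q^k} \in \Pol(o_F \dcroc{X}^{\psi=0})$ for every $k \geq 0$, and Lemma \ref{dsi} says these polynomials are part of a Mahler basis of $\Int$, hence are $k$-linearly independent modulo $\pi$. Thus the image of $\Pol(o_F \dcroc{X}^{\psi=0})$ in $\Int / \pi \cdot \Int$ contains infinitely many $k$-linearly independent vectors, proving (1).

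For part (2), let $\bar V \subseteq \Int / \pi \cdot \Int$ denote the image of $\Pol(o_F \dcroc{X}^{\psi=0})$. By Lemmas \ref{pollin} and \ref{poltrn}, $\bar V$ is a $k$-subspace stable under the affine substitutions $T \mapsto aT + b$ for all $a \in o_F^\times$ and $b \in o_F$. Using Lemma \ref{dsi}, I would extend $\{c_{1,q^k}\}_{k \geq 0}$ to a Mahler basis $\{P_n\}_{n \geq 0}$ of $\Int$ with $\deg P_n = n$ and $P_{q^k} = c_{1,q^k}$. The family $\{\bar P_n\}$ is then a $k$-basis of $\Int / \pi \cdot \Int$, so it suffices to show that $\bar P_n \in \bar V$ for every $n \geq 0$.

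I would prove this by strong induction on $n$, with base case $\bar P_0 = \bar 1 \in \bar V$ supplied by the proof of Theorem B. For the inductive step, given $n \geq 1$, pick $k$ large enough that $q^k > n$, so that $\bar P_{q^k} \in \bar V$. The translation-difference operator $\Delta_b f(T) := f(T + b) - f(T)$ preserves $\bar V$ by Lemma \ref{poltrn} and strictly lowers Mahler degree, so a well-chosen sequence of $q^k - n$ shifts $b_1, \ldots, b_{q^k - n} \in o_F$ should produce $\Delta_{b_{q^k - n}} \cdots \Delta_{b_1} \bar P_{q^k} \in \bar V$ of Mahler degree exactly $n$, with a unit leading Mahler coefficient equal to the product of the one-step ``descent coefficients'' at each intermediate degree. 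Subtracting the lower Mahler-degree contributions, which lie in $\bar V$ by the induction hypothesis, then isolates $\bar P_n \in \bar V$ up to a unit scalar.

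The main technical obstacle will be verifying that such sequences of shifts exist: concretely, that for every $m \geq 1$ there is some $b \in o_F$ for which the coefficient $\beta_{m, m-1}(b)$ appearing in $P_m(T + b) = P_m(T) + \beta_{m, m-1}(b) P_{m-1}(T) + \cdots$ is a unit modulo $\pi$. For $o_F = \Zp$ with the binomial Mahler basis $P_n = \binom{T}{n}$, this is immediate from Pascal's identity ($\beta_{m, m-1}(1) = 1$). For general $o_F$ it would require a judicious choice of Mahler basis extending $\{c_{1,q^k}\}$, with the scaling operators $T \mapsto aT$ from Lemma \ref{pollin} available as extra flexibility should pure differencing fall short.
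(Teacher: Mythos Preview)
Your proof of (1) is correct and coincides with the paper's: $X \in o_F\dcroc{X}^{\psi=0}$ gives $c_{1,q^k} \in \Pol(o_F\dcroc{X}^{\psi=0})$ for all $k$, and Lemma~\ref{dsi} makes these part of a Mahler basis, hence $k$-linearly independent mod $\pi$.

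For (2) your plan has a genuine gap, which you correctly identify but do not close. A direct computation shows that for \emph{any} Mahler basis $\{P_n\}$ one has
\[
\beta_{m,m-1}(b) \;=\; \frac{\lc(P_m)}{\lc(P_{m-1})}\cdot m\cdot b,
\]
so $v_p(\beta_{m,m-1}(b)) = v_p(m) + v_p(b) - (w_q(m)-w_q(m-1)) = v_p(m)+v_p(b)-v_q(m)$. When $f>1$ and $m=p$ this is $1+v_p(b)\geq 1$ for every $b\in o_F$, so the one-step descent from degree $p$ to degree $p-1$ \emph{always} lands in $\pi\cdot\Int$, regardless of which Mahler basis extending $\{c_{1,q^k}\}$ you choose. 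Scaling by units does not help here either, since $P_m(aT)$ again has the same Mahler degree $m$ with unit leading Mahler coefficient. So the inductive scheme, as stated, does not go through; something substantially more elaborate would be needed.

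The paper bypasses this combinatorics entirely by dualizing. One identifies $\Int/\pi\cdot\Int$ with $C^0(o_F,k)$, whose continuous dual is the Iwasawa algebra $k\dcroc{o_F}$, and lets $I\subset k\dcroc{o_F}$ be the annihilator of the image $\bar V$ of $\Pol(o_F\dcroc{X}^{\psi=0})$. Lemma~\ref{poltrn} (translation-stability) makes $I$ an ideal of the convolution ring $k\dcroc{o_F}$, and Lemma~\ref{pollin} (scaling-stability) makes $I$ stable under $o_F^\times$. The key external input is then a structural theorem on Iwasawa algebras (\S8.1 of \cite{Ard12}, or \cite{HMS14}): any $o_F^\times$-stable ideal of $k\dcroc{o_F}$ is either zero or open. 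Part (1) rules out open (an open $I$ would force $\bar V$ to be finite-dimensional), hence $I=0$ and $\bar V = \Int/\pi\cdot\Int$. Your approach was aiming for something more elementary and self-contained; the paper's approach trades that for a short argument resting on a nontrivial imported result.
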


\begin{proof}
Lemma \ref{cqkinpsizero} shows that $c_{1,q^k}\in \Pol(o_F \dcroc{X}^{\psi=0})+\pi\cdot \Int$ for all $k\geq 0$. By lemma \ref{dsi}, these elements  are part of a Mahler basis, hence linearly independent mod $\pi$. This implies (1). 

We now sketch the proof of (2). We have $\Int / \pi = C^0(o_F,k)$ and its dual is $k \dcroc{o_F}$. Let $I \subset k \dcroc{o_F}$ be the orthogonal of the image $P$ of $\Pol(o_F \dcroc{X}^{\psi=0})$ in $C^0(o_F,k)$. Since $P$ is stable under $f(T) \mapsto f(b \cdot T)$ for $b \in o_F^\times$ by lemma \ref{pollin}, $I$ is stable under the action of $o_F^\times$. Since $P$ is also stable under $f(T) \mapsto f(T+b)$ for $b \in o_F$ by lemma \ref{poltrn}, $I$ is an ideal of $k \dcroc{o_F}$. By either \S 8.1 of \cite{Ard12} or the main result of \cite{HMS14}, either $I=\{0\}$ or $I$ is open in $k \dcroc{o_F}$. By item (1), $I$ cannot be open, so that $I=\{0\}$ and hence $P = \Int/\pi$. 
\end{proof}

\section{Numerical verification}

In this {\S}, we assume that $F=\QQ_{p^2}$ and that $\pi=p$. We choose a coordinate $X$ on $\LT$ with $[p](X)=pX + X^q$. 

For a non-negative integer $n$, we write $\Int_{\leq n}$ for the $o_F$-module of integer-valued polynomials on $o_F$ of degree $\leq n$. Similarly, let us denote by $\Pol(o_F \dcroc{X}^{\psi=0})_{\leq n}$ the $o_F$-module generated by the set
\[\{ c_{f,m}: 0\leq m\leq n,\, f\in o_F \dcroc{X}^{\psi=0}\}.\] 
Recall from \cite[Proposition 2.2]{SI09}, that a sequence $\{ P_n \}_{n\geq 0}$ of integer-valued polynomials with $\deg(P_n)=n$ is a basis of $\Int$ if and only if $v_p(\lc(P_n))=-w_q(n)$, where 
 $w_q(n) = \sum_{k \geq 1} \lfloor n/q^k \rfloor$
and $\lc(P)$ denotes the leading coefficient of a polynomial $P$.

\begin{defi}
\label{defs0}
For a fixed $n\in \NN$, we define
$s_0(n) = \inf \{ N \geq n$ such that there exists $P\in \Pol(o_F \dcroc{X}^{\psi=0})_{\leq N}$ of degree $n$ such that $v_p(\lc(P))=-w_q(n) \}$.
\end{defi}

By Theorem A, we have  $\Lambda_F(\frX) = o_F \dcroc{o_F}$ if and only if $s_0(n)$ is finite for every $n\in \NN$. In this section, we  explain how to compute $s_0(n)$ numerically. As a first step, let us give explicit generators of the $o_F$-module $\Pol(o_F \dcroc{X}^{\psi=0})_{\leq N}$. 

Recall that $F=\QQ_{p^2}$ and that $\pi=p$.

\begin{lemm}
\label{ofpsizer}
We have $o_F \dcroc{X}^{\psi=0} = (\oplus_{i=1}^{q-2} X^i  \cdot \phi(o_F \dcroc{X}) ) \oplus (pX^{q-1} - (1-q))  \cdot \phi(o_F \dcroc{X})$.
\end{lemm}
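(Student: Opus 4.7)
The plan is to use that with the coordinate $X$ chosen so that $[p](X) = pX + X^q$, the ring $o_F \dcroc{X}$ is free of rank $q$ over $\phi(o_F \dcroc{X})$, with basis $\{1, X, X^2, \ldots, X^{q-1}\}$ (the polynomial $[p](T)$ is Eisenstein in $T$ over $\phi(o_F\dcroc{X})$). Hence every $f \in o_F \dcroc{X}$ admits a unique expansion $f = \sum_{i=0}^{q-1} X^i \phi(g_i)$ with $g_i \in o_F \dcroc{X}$. Working with this basis reduces everything to tracking the coefficients $g_0, \ldots, g_{q-1}$.

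First I would compute $\psi$ on each basis element. Since $F = \QQ_{p^2}$, $\pi = p$ and $q = p^2$, the relation $\psi_q = (\pi/q)\psi$ gives $\psi = p\, \psi_q$. Combined with lemma \ref{formpsi}, this yields $\psi(1) = p$, $\psi(X^i) = 0$ for $1 \leq i \leq q-2$, and $\psi(X^{q-1}) = 1-q$. Then, using the projection formula $\psi(\phi(g)\cdot X^i) = g \cdot \psi(X^i)$, I obtain
\[ \psi(f) = p\, g_0 + (1-q)\, g_{q-1}. \]
Since $q-1 \in o_F^\times$, the condition $\psi(f) = 0$ is equivalent to $g_{q-1} = \frac{p}{q-1}\, g_0$, while the $g_i$ for $1 \leq i \leq q-2$ are free.

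Next I would exhibit the claimed decomposition. Given $f \in o_F\dcroc{X}^{\psi=0}$, set $h = g_0/(q-1) \in o_F\dcroc{X}$; then $g_0 = (q-1) h$ and $g_{q-1} = p h$, so the $i=0$ and $i=q-1$ contributions combine as
\[ \phi(g_0) + X^{q-1}\phi(g_{q-1}) = (q-1)\phi(h) + p X^{q-1}\phi(h) = \bigl(pX^{q-1} - (1-q)\bigr)\phi(h). \]
The remaining terms $\sum_{i=1}^{q-2} X^i \phi(g_i)$ are already in the stated shape, giving the inclusion $\subseteq$. Conversely, both $X^i$ (for $1 \leq i \leq q-2$) and $pX^{q-1} - (1-q)$ lie in $o_F\dcroc{X}^{\psi=0}$ by the formula for $\psi$ above, and multiplying any of them by $\phi(o_F\dcroc{X})$ preserves the property $\psi = 0$ via the projection formula, which gives $\supseteq$.

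Finally I would check that the sum is direct. Expanding an element of the right-hand side in the basis $\{1,X,\ldots,X^{q-1}\}$ yields coefficients $(q-1)\phi(h)$, $\phi(g_1), \ldots, \phi(g_{q-2})$, $p\phi(h)$; uniqueness of the basis expansion forces $(q-1)\phi(h) = 0$ and each $\phi(g_i) = 0$, and since $q-1 \in o_F^\times$ and $\phi$ is injective this gives $h = 0$ and $g_i = 0$. There is no real obstacle here — the only point to be careful about is the precise conversion between $\psi$ and $\psi_q$ (the factor $q/\pi = p$) and the unit $(q-1)$ that lets one freely trade $g_0$ for the new free parameter $h$.
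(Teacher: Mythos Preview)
Your proof is correct and follows essentially the same approach as the paper: decompose $o_F\dcroc{X}=\bigoplus_{i=0}^{q-1} X^i\phi(o_F\dcroc{X})$, apply the values of $\psi_q$ (or equivalently $\psi$) on the basis elements from lemma~\ref{formpsi} together with the projection formula, and read off the constraint linking $g_0$ and $g_{q-1}$. You simply spell out more details (the conversion $\psi=p\,\psi_q$, the reparametrization by $h=g_0/(q-1)$, and the directness check) than the paper's terse argument, which works with $\psi_q$ directly.
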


\begin{proof}
We have $o_F \dcroc{X} = \oplus_{i=0}^{q-1} X^i \phi(o_F \dcroc{X})$. By lemma \ref{formpsi}, $\psi(1)=p$ and $\psi(X^i) = 0$ for $1 \leq i \leq q-2$ and $\psi(X^{q-1}) = (1-q)$. If $f = \sum_{i=0}^{q-1} X^i \phi(f_i)$ then $\psi(f) = p \cdot f_0 + (1-q) \cdot f_{q-1}$ so that $\psi(f)=0$ if and only if $f_{q-1} = -p/(1-q) \cdot f_0$.
\end{proof}

Since $c_{f,m}$ for $0\leq m\leq N$  only depends on $f\in o_F \dcroc{X}/X^{N+1} \dcroc{X}$, we obtain the following corollary:

\begin{coro}
\label{ofpsizer-coro}
Let $b_0,\dots,b_N$ be a basis of the $o_F$-submodule of $o_F \dcroc{X}/X^{N+1} \dcroc{X}$ generated by the images of
\begin{equation}\label{eq:genpsizer}
	\{ (pX^{q-1} - (1-q)) \phi(X)^j : 0\leq j\leq N  \}\cup \{ X^i \phi(X)^j : 0\leq i+ j\leq N,\  1 \leq i \leq q-2 \}
\end{equation}
in $o_F \dcroc{X}/X^{N+1} \dcroc{X}$, then the $o_F$-module $\Pol(o_F \dcroc{X}^{\psi=0})_{\leq N}$ is generated by
\[
	\{ c_{b,m}: b=b_0,\dots,b_N, 0\leq m\leq N \}.
\]
\end{coro}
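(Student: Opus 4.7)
The plan is to reduce the statement to a finite linear-algebra computation using the observation stated just before the corollary. That observation says: since $[a](X)$ has positive $X$-adic order, the coefficient $c_{f,m}(a)$ (which equals the coefficient of $X^m$ in $f([a](X))$) depends only on the image of $f$ in $o_F\dcroc{X}/X^{N+1}\dcroc{X}$ when $0 \leq m \leq N$. Moreover, $f \mapsto c_{f,m}$ is visibly $o_F$-linear in $f$, since $f \mapsto f([a](X))$ is. Let $M$ denote the image of $o_F\dcroc{X}^{\psi=0}$ in $o_F\dcroc{X}/X^{N+1}\dcroc{X}$. Then $\Pol(o_F\dcroc{X}^{\psi=0})_{\leq N}$ is generated, as an $o_F$-module, by the set $\{c_{\bar f, m} : \bar f \in T,\ 0 \leq m \leq N\}$ for any $o_F$-generating set $T$ of $M$.

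Next I would identify $M$ explicitly. By Lemma \ref{ofpsizer}, together with the identification $\phi(o_F\dcroc{X}) = o_F\dcroc{\phi(X)}$, the module $o_F\dcroc{X}^{\psi=0}$ is the (convergent) $o_F$-span of the elements $X^i\phi(X)^j$ with $1 \leq i \leq q-2$ and $j \geq 0$, and of the elements $(pX^{q-1}-(1-q))\phi(X)^j$ with $j \geq 0$. Because $\phi(X) = pX + X^q$ has $X$-adic order exactly $1$, the first family has $X$-adic order $i+j$, while the second has order $j$ — using here that $-(1-q) = q-1 \equiv -1 \pmod{p}$ is a unit of $o_F$, since $q = p^f$ with $f \geq 1$. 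Hence modulo $X^{N+1}$ only finitely many of these elements survive, precisely those listed in \ref{eq:genpsizer}, and $M$ is exactly the $o_F$-submodule of $o_F\dcroc{X}/X^{N+1}\dcroc{X}$ they generate.

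To finish, the submodule $M$ of the free $o_F$-module $o_F\dcroc{X}/X^{N+1}\dcroc{X}$ is finitely generated and torsion-free over the principal ideal domain $o_F$, hence free. Picking a basis $b_0,\ldots,b_N$ (with the understanding that the index range may be shorter if the rank of $M$ is less than $N+1$), any $\bar f \in M$ is an $o_F$-linear combination of the $b_i$, and by $o_F$-linearity in the first argument, so is $c_{\bar f, m}$. Combined with the reduction of paragraph one, this shows that the $c_{b_i, m}$ with $0 \leq i \leq N$ and $0 \leq m \leq N$ generate $\Pol(o_F\dcroc{X}^{\psi=0})_{\leq N}$.

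I do not anticipate a serious obstacle: the argument is essentially bookkeeping assembling (i) the observation preceding the corollary, (ii) the explicit decomposition in Lemma \ref{ofpsizer}, and (iii) freeness of finitely generated torsion-free modules over a PID. The one small point that deserves care is the computation of the $X$-adic orders of the generators of $o_F\dcroc{X}^{\psi=0}$, in particular the fact that $(pX^{q-1}-(1-q))\phi(X)^j$ has order $j$ and not larger, which depends on $-(1-q)$ being a unit of $o_F$.
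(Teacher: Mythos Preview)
Your proposal is correct and follows essentially the same approach as the paper: the paper's proof is just the single sentence preceding the corollary (that $c_{f,m}$ for $0\le m\le N$ depends only on $f$ modulo $X^{N+1}$), together with Lemma~\ref{ofpsizer}, and you have simply spelled out the bookkeeping in detail. The one extra point you supply---checking the $X$-adic orders of the generators so that the list \eqref{eq:genpsizer} exhausts the nonzero images---is correct and is left implicit in the paper.
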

The basis $b_0,\dots,b_N$ can easily be computed from the generators in \eqref{eq:genpsizer} using Gau{\ss}ian elimination.
In order to compute $\Pol(o_F \dcroc{X}^{\psi=0})_{\leq N}$ efficiently, we need to compute for $b\in \{b_0,\dots,b_N\}$ and $0\leq m\leq N$ the polynomials $c_{b,m}$. For $0\leq i\leq N$, let us write $b_i=b_{i,0}+b_{i,1}X+\dots b_{i,N}X^N$ and define the matrices
\[
	B=\begin{pmatrix}
		b_{0,0} & b_{0,1} & \dots & b_{0,N}\\
		b_{1,0} & b_{1,1} & \dots & b_{1,N}\\
		\vdots  & \ddots  &       &  \vdots\\
		b_{N,0} & b_{N,1} & \dots & b_{N,N}
	\end{pmatrix}\in \mathrm{Mat}_{N+1,N+1}(o_F),
\]
and
\[
	C=\begin{pmatrix}
		c_{0,0} & c_{0,1} & \dots & c_{0,N}\\
		c_{1,0} & c_{1,1} & \dots & c_{1,N}\\
		\vdots  & \ddots  &       &  \vdots\\
		c_{N,0} & c_{N,1} & \dots & c_{N,N}
	\end{pmatrix}\in \mathrm{Mat}_{N+1,N+1}(\Int).
\]
We have
\[
	B\cdot C= \begin{pmatrix}
		c_{b_0,0} & c_{b_0,1} & \dots & c_{b_0,N}\\
		c_{b_1,0} & c_{b_1,1} & \dots & c_{b_1,N}\\
		\vdots  & \ddots  &       &  \vdots\\
		c_{b_N,0} & c_{b_N,1} & \dots & c_{b_N,N}
	\end{pmatrix}\in \mathrm{Mat}_{N+1,N+1}(\Int).
\]
Hence, in order to compute $\Pol(o_F\dcroc{X}^{\psi=0})_{\leq N}$ it remains to compute the polynomials $c_{i,j}$ efficiently. For all $i \in \NN$, we have
\begin{equation}
\label{eq:defcij}
	[a](X)^i = \sum_{j\geq i}c_{i,j}(a)X^j=\exp_{\LT}(a\cdot\log_{\LT}(X))^i.
\end{equation}
Let us denote by $D=\left( \langle \log_{\LT}(X)^j \mid X^k \rangle \right)_{0\leq j,k \leq N}$
the (truncated) Carleman matrix of $\log_{\LT}$. Then \eqref{eq:defcij} can be re-written as the matrix identity
\[
	D\cdot C=\begin{pmatrix}
	1 &   &  &		\\
	  & a &  &		\\
	  &   & \ddots &\\
	  &   &  & a^N
	\end{pmatrix}\cdot D,
\]
and we get $C=D^{-1} \mathrm{diag}(1,a,\dots, a^N)D$. For the computation of the Carleman matrix of $\log_{\LT}$, we have the following efficient recursive formula for the coefficients of $\log_{\LT}(X)$. Write $\log_{\LT}(X) = \sum_{k \geq 1} h_k X^k$ with $h_1 = 1$.

\begin{lemm}
\label{logcoeff}
We have $h_n  = 1/(p - p^n) \cdot \sum_{i = 1}^{\lfloor n/q \rfloor}  h_j \binom{j}{i} p^{j-i}$ where $j=n-i(q-1)$.
\end{lemm}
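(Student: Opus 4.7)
The plan is to exploit the defining functional equation of the Lubin--Tate logarithm, namely $\log_{\LT}([p](X)) = p \cdot \log_{\LT}(X)$, which here reads
\[ \log_{\LT}(pX + X^q) = p \cdot \log_{\LT}(X). \]
I would start by expanding the left-hand side via the binomial theorem. Writing $L(X) = \sum_{k \geq 1} h_k X^k$, we have
\[ L(pX + X^q) = \sum_{k \geq 1} h_k \sum_{i=0}^{k} \binom{k}{i} p^{k-i} X^{k+i(q-1)}. \]
So the coefficient of $X^n$ on the left is obtained by summing over pairs $(k,i)$ with $k + i(q-1) = n$, i.e.\ over $i \geq 0$ with $k = j := n - i(q-1)$.

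Next I would equate coefficients of $X^n$ on both sides: the right-hand side contributes $p \cdot h_n$. The $i=0$ term on the left is $h_n \binom{n}{0} p^n = p^n h_n$, which I would move to the other side to obtain
\[ (p - p^n)\, h_n = \sum_{i \geq 1} h_j \binom{j}{i} p^{j-i}, \qquad j = n - i(q-1). \]
Dividing by $p - p^n$ (which is nonzero for $n \geq 2$, and the base case $h_1 = 1$ is given) yields the claimed formula.

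The only nontrivial bookkeeping is the upper summation bound $\lfloor n/q \rfloor$. Since $\binom{j}{i}$ vanishes whenever $j < i$, the nonzero terms require $j \geq i$, i.e.\ $n - i(q-1) \geq i$, which is equivalent to $i \leq n/q$, so $i \leq \lfloor n/q \rfloor$; the condition $j \geq 1$ is automatic in this range. This is the only minor obstacle in the argument; otherwise the proof is a direct comparison of coefficients.
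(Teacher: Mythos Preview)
Your proof is correct and follows essentially the same approach as the paper: both exploit the functional equation $\log_{\LT}(pX+X^q)=p\log_{\LT}(X)$ and compare coefficients of $X^n$. The only cosmetic difference is that the paper organizes the expansion of $\log_{\LT}(pX+X^q)$ via the Hasse--Taylor formula $\log_{\LT}(pX+X^q)=\sum_{i\geq 0}X^{qi}\log_{\LT}^{[i]}(pX)$, whereas you expand $(pX+X^q)^k$ directly with the binomial theorem; unwinding the Hasse derivative recovers exactly your sum, so the two computations are the same.
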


\begin{proof}
We have $\log_{\LT}(pX+X^q) = p \log_{\LT}(X)$. We can expand  $\log_{\LT}(pX+X^q)$ as
\[ \log_{\LT}(pX+X^q) = \log_{\LT}(pX) + \sum_{i \geq 1} X^{qi} \log_{\LT}^{[i]}(pX), \]
where $\log_{\LT}^{[i]}$ denotes the $i$th Hasse derivative of $\log_{\LT}$.
Computing the coefficient of $X^n$ on each side of $p \log_{\LT}(X) - \log_{\LT}(pX) = \sum_{i \geq 1} X^{qi} \log_{\LT}^{[i]}(pX)$, we get 
\[ (p - p^n) h_n = \sum_{i \geq 1} \langle \log_{\LT}^{[i]}(pX) \mid X^{n-qi} \rangle = \sum_{i = 1}^{\lfloor n/q \rfloor}  h_j \binom{j}{i} p^{j-i}, \text{ where $j=n-i(q-1)$}. \qedhere \]
\end{proof}

Fix a positive integer $N$. We now describe an algorithm for computing $s_0(n)$ for all $n\leq N$ (it returns $-1$ if $s_0(n)>N$).
\begin{enumerate}
	\item[(1)] Compute the Carleman matrix $D$ of $\log_{\LT}$, see Lemma \ref{logcoeff}.
	\item[(2)] Compute $(c_{i,j}(a))_{i,j}=C=D^{-1} \mathrm{diag}(1,\dots, a^N) D$.
	\item[(3)] Compute a basis $b_0,\dots, b_N$ of $o_F\dcroc{X}^{\psi=0}$ modulo $X^{N+1}$, see Lemma \ref{ofpsizer}, and store the coefficients of $b_0,\dots,b_N$ in a matrix $B$.
	\item[(4)] The matrix $B\cdot C=(c_{b_i,m})_{i,m}$ contains generators of $\Pol(o_F\dcroc{X}^{\psi=0})_{\leq N}$, see Corollary \ref{ofpsizer-coro}. 
	\item[(5)] For each $n$ with $0\leq n\leq N$ let $s_0(n)$ be the smallest $s\in\{1,\dots,N\}$ such that the $o_F$-module spanned by $c_{b_i,m}$ with $0\leq i,m\leq s$ contains a polynomial of leading coefficient $-w_q(n)$. If there is no $s$ with this property set $s_0(n)=-1$.
\end{enumerate}
For an implementation of this algorithm in SageMath, see Appendix A. The code is adapted from a similar program by Crisan and Yang, see the appendix of \cite{AB24}.

Here are some results, running SageMath 10.5 \cite{Sag25} on an M1 iMac.

\begin{enumerate}
\item For $p=2$ and $N=800$ and precision $6000$ we find that $s_0(n)$ is finite for $n \leq 206$

\begin{figure}[H]
\includegraphics[width=0.8 \textwidth]{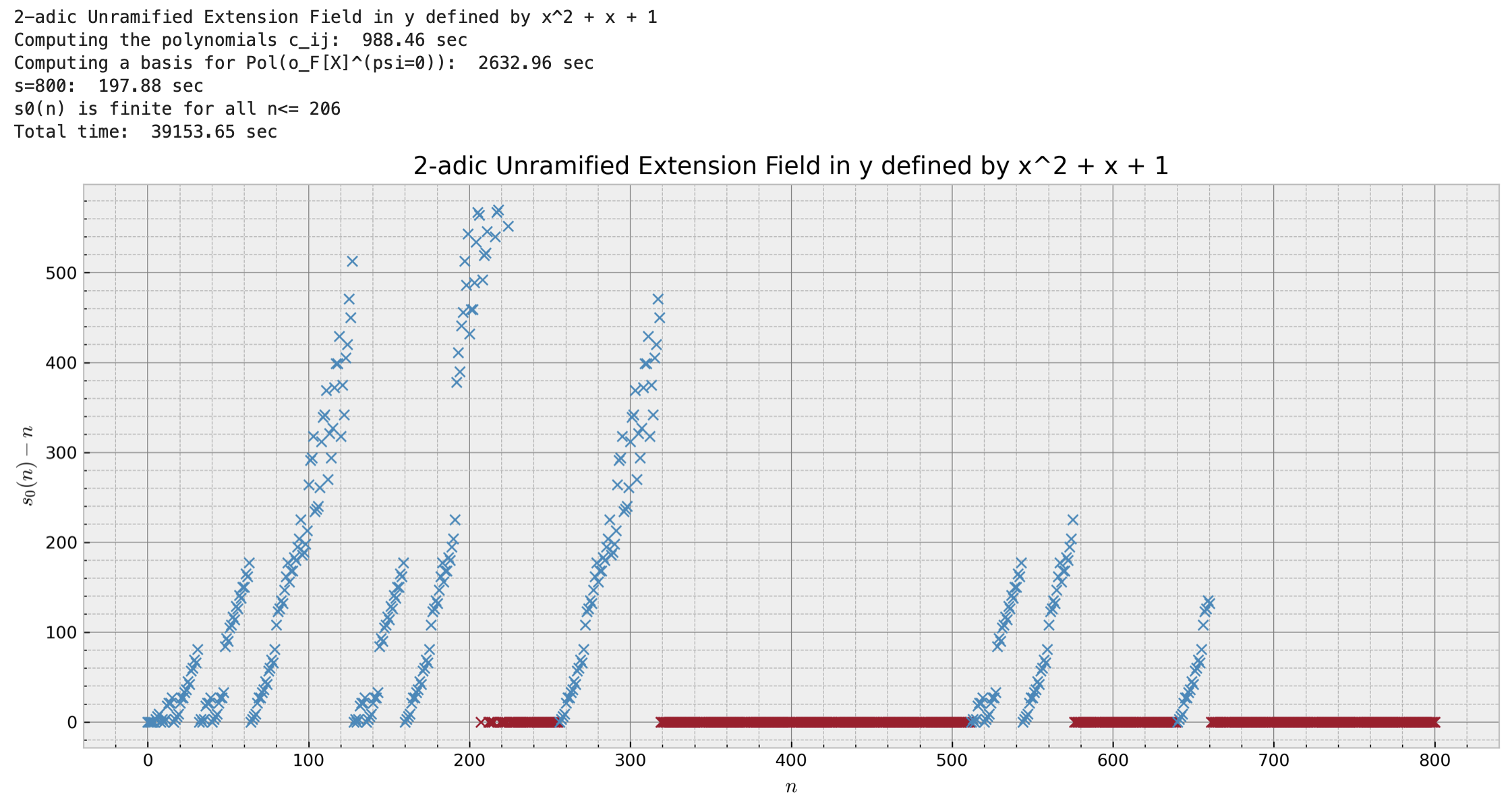}
\caption{Plot of $s_0(n)-n$ for $p=2$ and $N=800$. Red points are the $n$'s for which $s_0(n)=-1$.}
\end{figure}

\item For $p=3$ and $N=800$ and precision $6000$ we find that $s_0(n)$ is finite for $n \leq 226$

\begin{figure}[H]
\includegraphics[width=0.8 \textwidth]{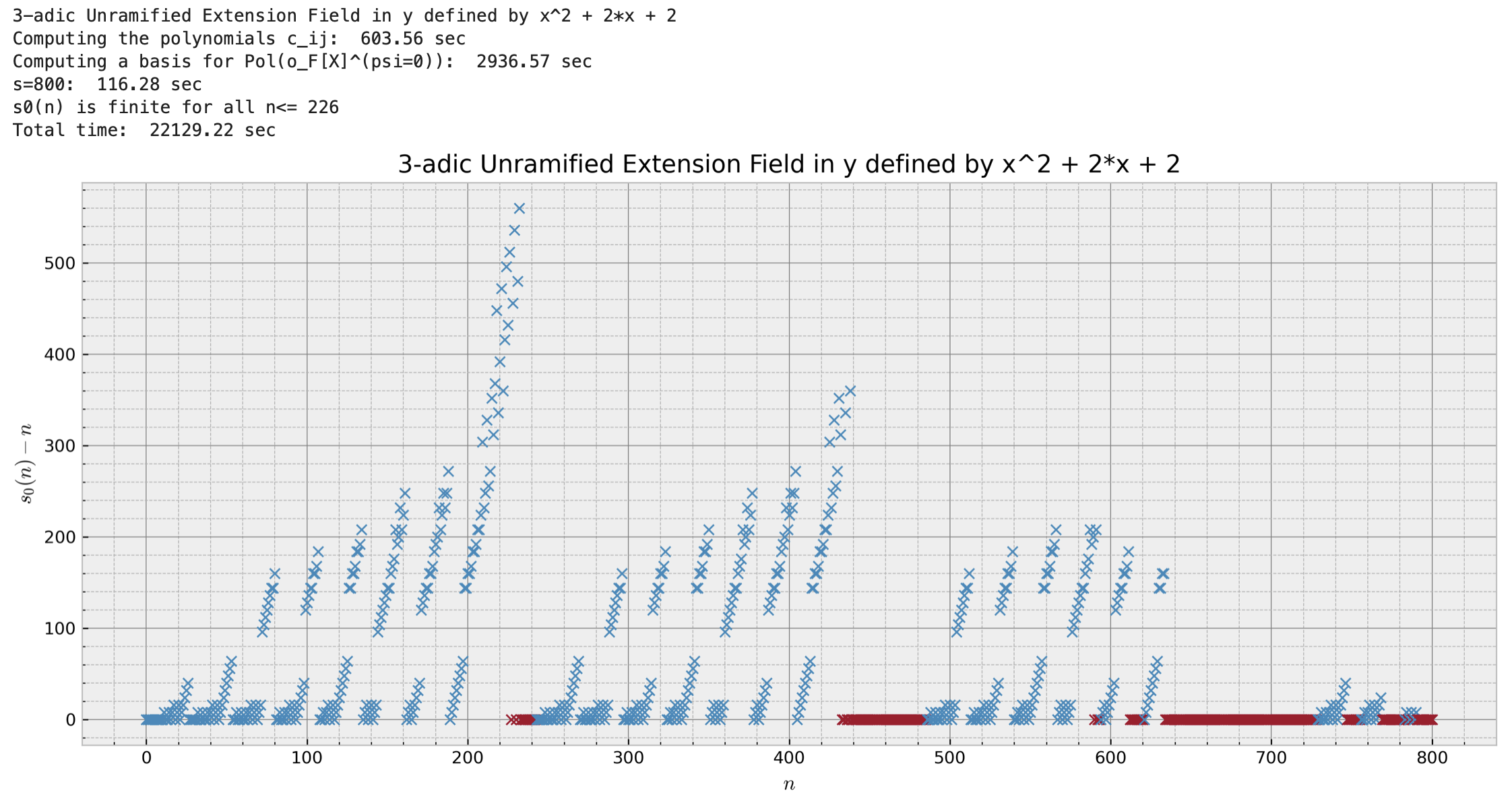}
\caption{Plot of $s_0(n)-n$ for $p=3$ and $N=800$. Red points are the $n$'s for which $s_0(n)=-1$.}
\end{figure}
\end{enumerate}

\appendix

\section{SageMath code}

\definecolor{sageOliveGreen}{rgb}{0,0.4,0}
\definecolor{sageTeal}{rgb}{0,0.4,0.4}
\definecolor{sageGray}{rgb}{0.5,0.5,0.6}
\definecolor{sageDarkRed}{rgb}{0.5,0,0.1}
\definecolor{sageWhite}{rgb}{1,1,1}
\lstdefinestyle{SAGEcode}{
    backgroundcolor=\color{sageWhite},
    commentstyle=\color{sageTeal},
    keywordstyle=\color{sageOliveGreen},
    numberstyle=\tiny\color{sageGray},
    stringstyle=\color{sageDarkRed},
    basicstyle=\ttfamily\tiny,
    breaklines=true
}
\lstset{style=SAGEcode}
\lstinputlisting[language=Python]{polint.sage}

\end{document}